\def\ps@pprintTitle{%
 \let\@oddhead\@empty
 \let\@evenhead\@empty
 \def\@oddfoot{}%
 \let\@evenfoot\@oddfoot}
\newtheorem{theorem}{Theorem}[section]
\newtheorem{corollary}{Corollary}[section]
\newtheorem{lemma}{Lemma}[section]
\newtheorem{definition}{Definition}[section]
\newtheorem{claim}{Claim}[section]
\begin{document}

\begin{frontmatter}

%% Title, authors and addresses

\title{Modal logic with
the difference modality of topological $T_0$-spaces\footnote{This publication was prepared within the framework Academic Fund Program at the National Research University Higher School of Economics (HSE) in 2019 (grant \textnumero 19-04-050 ) and the Russian Academic Excellence Project "5-100"}}

\author[rvt]{Rajab Aghamov}

\address{Higher School of Economoics \\6 Usacheva st., Moscow,  119048, 

agamov@phystech.edu}

\begin{abstract}
%% Text of abstract
The aim of the paper is to study the topological modal logic of $T_0$ spaces, with the difference modality (for $T_n$, where $n\geq1 $ the corresponding logics were known). We consider propositional modal logic with two modal operators $\square$ and $[\ne]$. $\square$ is interpreted as an interior operator and $[\ne]$ corresponds to the inequality relation. We introduce the logic $S4DT_0$ and show that  $S4DT_0$ is the logic of all $T_0$ spaces and has the finite model property.
\end{abstract}

\begin{keyword}
Kripke sematics, finite model property, completeness, topological semantics
\end{keyword}

\end{frontmatter}

\section*{Introduction} 
%  usual sectioning commands i.e.,
%  \section, \subsection, \section*, \subsection*, etc are 
%  allowed.

In this paper we study the topological semantics of modal logics. Several interpretations of the modal box as an operator over a topological space are possible. Namely diamond-as-closure-operator and diamond-as-derivation-operator have been pioneering in the semantics of modal logic as far back as in 1944, in the celebrated paper of McKinsey and Tarski (cf. \cite{7}). They showed that \textbf{S4} is the logic of all topological spaces and the logic of any metric dense-in-itself space is \textbf{S4}. 
This remarkable result also demonstrates a relative weakness of the interior operator to distinguish between interesting topological properties

The second interpretation gives more expressive power. $T_0$ and $T_D$ separation axioms become expressible (cf. \cite{1}, \cite{17}); the real line can be distinguished from the rational line (cf. \cite{21}). It also has its limitations (for example, it is still impossible to distinguish $\mathbb{R}^2 $ from $\mathbb{R}^3 $). 

We can increase the expressive power by adding extra modalities 
 (cf. \cite{22}, \cite{23}). For example, connectedness is expressible in modal logic with the interior and the universal modality (cf. \cite{24}) and $T_1$ separation axiom becomes expressible in modal logic with the interior and the difference modality (cf. \cite{25}). 

In this paper we add the difference modality (or modality of inequality) $[\ne]$, interpreted as ''true everywhere except here'' .  The expressive power of this language in topological spaces has been studied by Gabelaia in \cite{15}, where he presented an axiom that defines $T_0$ spaces. 

The first section contains basic information, definitions and results from the theory of modal logics and general topology.

In the last two sections we formulate completeness of $\mathbf{S4DT_0}$ logic with respect to $T_0$ spaces respectively and the finite modal property.

\section{Language, axioms and logic}

In this paper, we study propositional modal logics with two modal operators, $\square$ and $[\ne ]$. A formula is defined as follows:

$$\phi ::= p \,|\,\bot \,|\,\phi\rightarrow\phi\,|\,\square\phi\,|\,[\ne]\phi$$

The classic logic operators $\vee, \wedge, \neg, \top, \equiv$ are expressed in terms of $\rightarrow$ and $\bot$ in the standard way. The dual modal opertors $\Diamond$ and $\langle \neq \rangle $ are defined as usual: $\Diamond \phi = \neg\square\neg \phi $, $\langle \neq \rangle \phi = \neg[\ne]\neg \phi$ respectively. We denote $[\ne]\phi\wedge \phi$ by $[\forall]\phi$.

The set of all bimodal formulas is called the $bimodal\; language$ and is denoted by $\mathcal{ML}_2 $.

\begin{definition}

A ($normal\; bimodal)\, logic$ is a set of formulas {\itshape L} $\subseteq \mathcal{ML}_2$ such that:

{\bf 1.} $L$ contains all the classical tautologies.

{\bf 2.} $L$ contains the modal axioms of normality:

\begin{center}

$\square(p\rightarrow q)\rightarrow (\square p\rightarrow \square q ) $, 

$[\ne](p\rightarrow q)\rightarrow ([\ne] p\rightarrow [\ne] q ) $.
\end{center}

{\bf 3.} $L$ is closed with respect to the following inference rules:
\begin{center}

$\frac{\phi\rightarrow \psi,\; \phi}{\psi}$ (MP),

$\frac{\phi}{\square \phi}\, (\rightarrow\square)$,

$\frac{\phi}{[\ne] \phi} (\rightarrow [\ne])$,

$\frac{\phi}{\lbrack \psi/p \rbrack \phi}$ (Sub).

\end{center}

\end{definition}

Let $L$ be a logic and $\Gamma$ be a set of formulas. The  minimal logic containing $L\cup \Gamma$ is denoted by $L+\Gamma$. We also write $L + \psi $ instead of $L + \{\psi \}$.

In this paper we will use the following axioms:

{\bf ($T_{\square}$)} \quad $\square p\rightarrow p$,

{\bf ($4_{\square}$)} \quad $\square p\rightarrow \square\square p$,

{\bf ($D_{\square}$)} \quad $\lbrack\forall\rbrack p\rightarrow\square p$,

{\bf ($B_{D}$)} \quad $p\rightarrow\lbrack\neq\rbrack\langle \neq \rangle p$,

{\bf ($4_{D}$)} \quad $\lbrack\forall\rbrack p \rightarrow\lbrack\neq\rbrack\lbrack\neq\rbrack p$,

{\bf ($AT_{0}$)} $\quad (p\wedge \lbrack \neq \rbrack \neg p \wedge \langle \neq \rangle (q\wedge \lbrack \neq \rbrack \neg q))\rightarrow(\square\neg q\vee\langle \neq \rangle(q\wedge\square\neg p))$. 

We introduce the notation for the following logics:
\begin{center}
\textbf{S4} = $\mathbf{K_1}$ +$T_{\square}$ + $4_{\square}$

$\mathbf{S4D}$ = $\mathbf{K_2}$ + $T_{\square}$ + $4_{\square}$ + $D_{\square}$ + $B_{D}$ + $4_{D}$ 

$\mathbf{S4DT_0}$ = \textbf{S4D} + $AT_0$ 
\end{center}

\section{Topological semantics}
\begin{definition}
A \emph{topological space} is a pair $\mathbb{X} = (X, \Omega)$ where $X$ is a nonempty set (the $domain$ of the space) and $\Omega$  is a set of subsets of $X$ satisfying the following properties:

$\bf{1.}$ The empty set $\emptyset$ and $X$ itself belong to $\Omega$.

$\bf{2.}$ The union of any collection of sets from $\Omega$ is contained in $\Omega$.

$\bf{3.}$ The intersection of any finite collection of sets from $\Omega$ is also contained in $\Omega$.

\end{definition}

The elements of $\Omega$ are called $open\; sets$ and $\Omega$ is called a $topology$ on $X$. If $(X, \Omega)$ is a topological space and $x$ is a point in $X$, a $neighbourhood$ of $x$ is an open set $U$ containing $x$. A $closed\,\, set$ is a set whose complement is an open set.

\begin{definition}
The \emph{interior} of a set $A$ in a topological space $\mathbb{X}$ is the greatest (with respect to inclusion) open set in $\mathbb{X}$ contained in $A$, i.e., an open set that contains any other open subset of $A$.  It is denoted by Int $A$.
\end{definition}

\begin{definition}

The closure of a set $A$ is the smallest closed set containing $A$.  It is denoted Cl $A$.

\end{definition}

\begin{definition}

Let $\mathbb{X}$ be a topological space, then $\mathbb{X}$ is an \emph{Alexandroff space} if arbitrary intersections of open sets are open.

\end{definition}

\begin{definition}

 A topological space $\mathbb{X}$ is a $T_0$-space if for every pair of distinct points of $X$, at least one of them has a neighborhood not containing the other.

\end{definition}

\begin{definition} A \emph{topological model} on a topological space $\mathbb {X}: = (X, \Omega) $ is a pair $ (\mathbb{X}, V)$, where $V: PV \rightarrow P(X)$ (the set of all subsets), i.e.$\:$ a function that assigns each propositional variable to $p$ a set $V(p) \subseteq X $ and is called a $valuation$. The truth of a formula $ \phi $ at a point $ x $ of a topological model $ \mathcal {M} = (\mathbb {X}, V) $ (notation: $ \mathcal{M}, x \vDash \phi $) is defined by induction:

$\mathcal{M}, x\vDash p \Leftrightarrow x\in V(p) $,

$\mathcal{M}, x\nvDash \bot  $,

$\mathcal{M}, x\vDash \phi\rightarrow\psi \Leftrightarrow \mathcal{M}, x\nvDash \phi $ or $\mathcal{M}, x\vDash \psi $,

$\mathcal{M}, x\vDash \square \phi\Leftrightarrow\exists U\in \Omega (x \in U$ and $\forall y \in U (\mathcal{M}, y\vDash\phi)$, 

$\mathcal{M}, x\vDash \lbrack \neq \rbrack \phi\Leftrightarrow \forall y\neq x (\mathcal{M}, y\vDash\phi)$. 

\end{definition}

\begin{definition} Let $ \mathcal {M} = (X, \Omega, V)$ be a topological model and $ \phi $ be a formula. We say that the formula $\phi$ is true in the model $ \mathcal {M} $ (notation: $ \mathcal {M} \vDash \phi $), if it is true at all points of the space, i.e.

\begin{center}

$\mathcal{M}\vDash\phi\Leftrightarrow\forall x \in X, \mathcal{M}, x\vDash\phi$.

\end{center}

\end{definition}

\begin{definition}
Let $ \mathbb {X} = (X, \Omega) $ be a topological space, $ \mathcal {C} $ be a class of spaces and  $ \phi $ be a formula. We say that a formula is valid in $ \mathbb {X} $ (notation: $ \mathbb {X}\vDash \phi $) if it is true in every model on this topological space, i.e.

\begin{center}

$\mathbb{X}\vDash\phi\Leftrightarrow\forall V \; (\mathbb{X}, V\vDash\phi)$.

\end{center}

\end{definition}

{\noindent}We say that the formula $ \phi $ is valid in $\mathcal {C} $ if it is valid in every space in $ \mathcal {C} $.

\begin{definition} The logic of a class of topological spaces $ \mathcal {C} $ (denoted by $ Log (\mathcal {C})) $ is the set of all formulas of the language $ \mathcal {ML}_2 $ that are valid in all spaces of the class $ \mathcal{C} $.

\end{definition}

\begin{theorem} (c.f. \cite {20}). Let $\mathcal{C}$ be a class of topological spaces. Then $ Log(\mathcal{C})$ is a modal logic.
\end{theorem}

\begin{lemma}

Let $\mathcal{M} = (X,\Omega, V)$ be a topological model. Denote $V(\phi) = \{x\in X\;|\;\mathcal{M}, x\vDash\phi\}$, where $\phi$ is an arbitrary formula. Then we have:

{\bf 1.} $V(\phi\vee\psi) = V(\phi)\cup V(\psi)$

{\bf 2.} $V(\phi\wedge\psi) = V(\phi)\cap V(\psi)$

{\bf 3.} $V(\neg\phi) = X - V(\phi)$

{\bf 4.} $V(\phi\rightarrow\psi) = (X - V(\phi))\cup V(\psi)$

{\bf 5.} $\mathcal{M}\vDash\phi\rightarrow\psi\Leftrightarrow V(\phi)\subseteq V(\psi)$

{\bf 6.} $V(\top) = X$

{\bf 7.} $V(\square\phi) = Int(V(\phi)) $

{\bf 8.} $V(\Diamond\phi) = Cl(V(\phi)) $

\end{lemma}
\begin{proof}
Since the first 6 points are obvious, so we prove only last 2 points.

{\bf 8}. \,$\,x \in V(\square\phi)$

$\Leftrightarrow\mathcal{M}, x\vDash\square\phi$

$\Leftrightarrow\exists U\in\Omega(x\in U\;\&\;\forall y\in U(\mathcal{M}, y\vDash\phi))$

$\Leftrightarrow\exists U\in\Omega(x\in U\;\&\;\forall y\in U(y\in V(\phi))$

$\Leftrightarrow\exists U\in\Omega(x\in U\;\&\; U\subseteq V(\phi))$

$\Leftrightarrow x\in Int(V(\phi))$

9. \,$\,x\in V(\Diamond\phi)$

$\Leftrightarrow\mathcal{M}, x\vDash\Diamond\phi$

$\Leftrightarrow\mathcal{M}, x\vDash\neg\square\neg\phi$

$\Leftrightarrow x \in X - Int(X - v(\phi))$

$\Leftrightarrow x\in Cl(V(\phi))$

\end{proof}

\begin{lemma}
\textit{Let $\mathbb{X} = (X, \Omega)$ be a topological space then  $\mathbb{X} \vDash AT_0$ iff $\mathbb{X}$
is a $T_0$ space.}
\end{lemma}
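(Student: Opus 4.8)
The plan is to show that, read through the topological semantics, the axiom $AT_0$ is essentially a verbatim transcription of the $T_0$ separation condition, once the variables $p$ and $q$ are forced to name single points. First I would record the key unwinding of the semantics: in a topological model $\mathcal{M}=(\mathbb{X},V)$, we have $\mathcal{M},x\vDash r\wedge[\ne]\neg r$ exactly when $V(r)=\{x\}$; we have $\mathcal{M},x\vDash\langle\ne\rangle(q\wedge[\ne]\neg q)$ exactly when $V(q)=\{y\}$ for some $y\ne x$; we have $\mathcal{M},x\vDash\square\neg q$ exactly when some open neighbourhood of $x$ is disjoint from $V(q)$; and $\mathcal{M},x\vDash\langle\ne\rangle(q\wedge\square\neg p)$ exactly when some point $z\ne x$ satisfies $z\in V(q)$ and has an open neighbourhood disjoint from $V(p)$.

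For the direction ($\Leftarrow$), assume $\mathbb{X}$ is a $T_0$ space, fix an arbitrary valuation $V$ and a point $x$ at which the antecedent of $AT_0$ holds. By the unwinding above, $V(p)=\{x\}$ and $V(q)=\{y\}$ for some $y\ne x$. Applying the $T_0$ property to the distinct points $x$ and $y$, either $x$ has a neighbourhood $U$ with $y\notin U$, or $y$ has a neighbourhood $U'$ with $x\notin U'$. In the first case $U$ witnesses $\mathcal{M},x\vDash\square\neg q$, since $V(q)=\{y\}$ and $y\notin U$. In the second case $U'$ witnesses $\mathcal{M},y\vDash\square\neg p$, and because $y\ne x$ and $y\in V(q)$ this yields $\mathcal{M},x\vDash\langle\ne\rangle(q\wedge\square\neg p)$. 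Either way the consequent of $AT_0$ holds at $x$, so $\mathbb{X}\vDash AT_0$.

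For ($\Rightarrow$), assume $\mathbb{X}\vDash AT_0$ and take any two distinct points $x,y\in X$. Choose a valuation $V$ with $V(p)=\{x\}$ and $V(q)=\{y\}$ (the values on the remaining variables are irrelevant). Then the antecedent of $AT_0$ holds at $x$, hence so does the consequent. If $\mathcal{M},x\vDash\square\neg q$, then some open neighbourhood of $x$ is disjoint from $V(q)=\{y\}$, i.e. avoids $y$. If instead $\mathcal{M},x\vDash\langle\ne\rangle(q\wedge\square\neg p)$, the witnessing point must be $y$ itself, since $V(q)=\{y\}$, and then $\mathcal{M},y\vDash\square\neg p$ gives an open neighbourhood of $y$ disjoint from $V(p)=\{x\}$. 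In both cases $x$ and $y$ are separated in the sense required by the $T_0$ axiom, so $\mathbb{X}$ is a $T_0$ space.

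I do not expect a genuine obstacle here; the only thing requiring care is the bookkeeping for the nested modalities — in particular, verifying that a singleton valuation of $q$ pins the existential witness of $\langle\ne\rangle(q\wedge\cdots)$ down to the unique point of $V(q)$, and that the ``$\ne$'' constraint in these diamonds is automatically met because that point differs from $x$.
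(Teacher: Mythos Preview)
Your proof is correct and follows essentially the same approach as the paper's: both arguments hinge on the observation that the antecedent forces $V(p)=\{x\}$ and $V(q)=\{y\}$ for some $y\ne x$, after which the consequent is a direct transcription of the $T_0$ separation condition for $x$ and $y$. The only cosmetic difference is that the paper phrases the $(\Rightarrow)$ direction as a proof by contradiction (assuming a failure of $T_0$ and refuting $AT_0$), whereas you argue it directly; your write-up is in fact somewhat more explicit about the semantic unwinding.
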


\begin{proof}

($\Rightarrow$) We prove by contradiction. Assume $\mathbb{X} \vDash AT_0$ and let there be points $x\neq y $ such that $\forall U\in\Omega,\, x\in U \Leftrightarrow y\in U$. Define a valuation $V$ such that $V(p) = \{x\}$ and $V(q) = \{y\}$. Then $\mathbb{X}, V, x \vDash p\wedge \lbrack \neq \rbrack \neg p \wedge \langle \neq \rangle (q\wedge \lbrack \neq \rbrack \neg q)$ and $\mathbb{X}, V, x \nvDash \square\neg q\vee\langle \neq \rangle(q\wedge\square\neg p)$. This contradicts the fact that $\mathbb{X}\vDash AT_0$. 

($\Leftarrow$)Assume $\mathbb{X}$ is a $T_0$ space. Let $\mathbb{X}, V, x\vDash p\wedge \lbrack \neq \rbrack \neg p \wedge \langle \neq \rangle (q\wedge \lbrack \neq \rbrack \neg q)$. Then there is a point $y$, such that $V(q) = \{y\}$. Further, at least one of the points $x$ and $y$ is contained in a neighborhood that does not contain the other. That means $\mathbb{X}, V, x \vDash \square\neg q$ or $\mathbb{X}, V, y \vDash \square\neg p$ which proves our assertion.

\end{proof} 

\begin{definition} A logic $L$ is $complete$ with respect to a class of topological spaces $\mathcal C$ if $Log(\mathcal C) = L$.
\end{definition}

\begin{theorem}(cf. \cite{20}) The logic $S4D$ is complete with respect to all topological spaces.
\end{theorem}

\section{Kripke semantics.}

\begin{definition} A $Kripke\, frame$ is a tuple  ${\displaystyle \langle W,R_1,\ldots, R_n\rangle }$, where $ W\ne\emptyset \,$ is a set, and $R_i$ (for $i = 1,\ldots,n$) is a binary relation on $W$. Elements of $W$ are called \emph{points or worlds}, and $R_i$ for $i = 1,\ldots,n$ is an \emph{accessibility relation}.
\end{definition}
In this article we will deal with Kripke frames with two binary relations. The first relation will be denoted by $R$, the second by $R_D$.

\begin{definition} A $valuation$ on a Kripke frame $ F = (W,\: R_1,\: R_2,\: ...,\: R_n) $ is a function $ V: PV \longrightarrow 2^W $. A \emph{Kripke model} is a pair $ M = (F, V) $. Then we inductively define the notion of a formula $\phi$ being true in $M$ at a point $x$  as follows:

\begin{center}

$M, x\vDash p \Leftrightarrow x \in V(p)$, for $p\in PV$

$M, x\nvDash \bot$

$M, x\vDash \phi\rightarrow \psi \Leftrightarrow M,\; x\nvDash \phi$ or $M,\; x\vDash \psi$

$M, x\vDash \square_{i} \phi \Leftrightarrow \forall y(xR_{i}y\Rightarrow M, y\vDash \phi) $

\end{center}

\end{definition}

 For a subset $U \subseteq W\: M, U\vDash \phi$ denotes that for any $x\in U (M, x\vDash \phi).$ We say that a formula $ \phi $ is $valid$ in a model $M$ (notation: $ M \vDash \phi $), if $ \forall x \in W (M, x \vDash \phi) $. We also say that a formula $ \phi $ is $valid$ on a frame $F $(notation: $ F \vDash \phi $) if it is valid in all models of the frame $F$ and a formula is $valid$ in a class of frames if it is valid in every frame from this class.

\begin{definition} The $logic$ of a class of frames $\mathcal{C}$ (in notation $Log(\mathcal{C})$) is the set of formulas that are valid in all frames from $\mathcal{C}$. For a single frame $F$, $Log(F)$ stands for $Log(\{F\})$.
\end{definition}
\begin{definition} A logic $L$ is called Kripke complete if there exists a class of frames $ \mathcal{C}$, such that $L = Log(\mathcal{C}) $.
\end{definition}
\begin{definition} Let $L$ be a modal logic. A frame $F$ is called an $L$-frame if $ L \subseteq Log(F) $.
\end{definition}
\begin{theorem} (c.f. \cite {3}). Let $F$ be a Kripke frame. Then $ Log(F) = \{\phi \; | \; F  \vDash \phi \} $ is a modal logic.
\end{theorem}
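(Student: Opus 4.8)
The plan is to verify directly that $L(F)$ satisfies the three conditions in the definition of a normal bimodal logic. Throughout, I abbreviate "$\phi$ is true at $x$ in every model $M=(F,V)$ on $F$" by "$\phi$ is valid at $x$", and use that $F\vDash\phi$ means precisely that $\phi$ is valid at every point of $W$. The proof is then nothing more than an unwinding of the semantic clauses, done separately for the Boolean part, the normality axioms, and the inference rules.

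For the Boolean conditions: since the inductive clauses for $\bot$ and $\to$ in the Kripke truth definition reproduce the classical truth tables, a routine induction on the construction of any propositional tautology $\phi$ shows that $M,x\vDash\phi$ for every model $M$ and every $x$; hence $F\vDash\phi$, so all classical tautologies lie in $L(F)$. For the normality axioms, fix $M=(F,V)$ and $x\in W$. If $M,x\vDash\square(p\to q)$ and $M,x\vDash\square p$, then every $R$-successor $y$ of $x$ satisfies $y\vDash p\to q$ and $y\vDash p$, hence $y\vDash q$; thus $x\vDash\square q$, which gives $F\vDash\square(p\to q)\to(\square p\to\square q)$. The argument for $[\ne]$ is word for word the same with $R_D$ in place of $R$, since the clause for $[\ne]$ is exactly the box clause for the relation $R_D$.

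For the inference rules: for (MP), if $F\vDash A\to B$ and $F\vDash A$ then at every $M,x$ we have $x\vDash A\to B$ and $x\vDash A$, hence $x\vDash B$, so $F\vDash B$. For the necessitation rules, if $F\vDash A$ then $A$ is true at every point of every model; in particular every $R$-successor (resp.\ every $R_D$-successor) of an arbitrary $x$ satisfies $A$, so $x\vDash\square A$ (resp.\ $x\vDash[\ne]A$), giving $F\vDash\square A$ and $F\vDash[\ne]A$.

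The only step requiring an auxiliary lemma — and the main, if still routine, point — is closure under substitution. Given a formula $B$, a variable $p$, and a model $M=(F,V)$, define $V'$ by $V'(p)=\{y\in W : M,y\vDash B\}$ and $V'(q)=V(q)$ for $q\neq p$, and set $M'=(F,V')$. One proves by induction on the structure of $A$ the substitution lemma: $M',x\vDash A \Leftrightarrow M,x\vDash [B/p]A$ for all $x\in W$. The modal cases go through because $V$ and $V'$ induce the same truth values everywhere except possibly at $p$, while the relations $R,R_D$ are unchanged, so the relevant sets of successors are the same. Granting the lemma, if $F\vDash A$ then $M',x\vDash A$ for all $x$, hence $M,x\vDash[B/p]A$ for all $x$; as $M$ was arbitrary, $F\vDash[B/p]A$. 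This establishes all three conditions, so $L(F)$ is a normal bimodal logic. I expect the bookkeeping in the induction for the substitution lemma to be the only place that needs care; the rest is an immediate reading-off of the definitions.
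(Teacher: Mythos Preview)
Your argument is correct and is the standard direct verification that the set of validities on a frame is a normal modal logic; the substitution lemma via the modified valuation $V'$ is exactly the right device. The paper itself does not give a proof of this theorem at all: it simply records it as a known fact with a reference to Chagrov and Zakharyaschev, so there is no ``paper's own proof'' to compare against beyond noting that the textbook argument is essentially what you wrote.
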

Let us introduce some notations. Let $ W $ be an arbitrary nonempty set, $ B \subseteq W $; $ R, \; R '\subseteq W \times W $ are relations on $ W $.

\begin{center}

$R|_{B}\rightleftharpoons R\cap(B\times B)$;

$Id_{W}\rightleftharpoons \{(x,x)|x\in W\}$;

$R^{+}\rightleftharpoons R\cup Id_{W} $ (reflexive closure);

$R\circ R'\rightleftharpoons \{(x,z)|\exists y(xRy \;\&\; yR'z )\}$;

$R^{0}\rightleftharpoons Id_{W}$;

$R^{n+1}\rightleftharpoons R^{n}\circ R$;

$R^{*}\rightleftharpoons\bigcup_{n=0}^{\infty} R^{n}$(reflexive and transitive  closure).

\end{center}

Let $F = (W, R_1, ..., R_n)$ be a frame, and let $ x \in W $.  $ R_ {i} (x) = \{y \; | \; xR_ {i} y \}$,$\:$ $R_{i}^{-1}(x) = \{y \; | \; yR_{i}x \}$.$\:$Let $ U \subseteq W $, then $ R_{i}(U) = \bigcup_{x \in U} R_ {i}(x) $, $R_{i}^{-1} (U)= \bigcup_ {x\in U} R_{i}^{-1}(x)$.

\begin{definition} Let $ F = (W, R_1, ..., R_n) $ be a Kripke frame and $ S ^ {*} $ be the transitive and reflexive closure of the relation $S = (\bigcup_{i =0}^{n}R_{i}) $. For $x\in W, \; W^{x} \rightleftharpoons \{y \; | \; xS^{*} y \} $ (the set of all points reachable from the point $x$ by relation $S^{*}$). Frame $F^{x} = (W^{x}, R_{1}|_{W ^{x}}, ..., R_{n}|_{W ^ {x}}) $ is called a\emph{ generated subframe (cone)}.
\end{definition}

\begin{lemma} Let $F = (W, R_1, R_2, ..., R_n)$ be a Kripke frame and $ \mathcal{C} $ be a class of Kripke frames, then 

$1. \,Log(F) = \bigcap_{x\in W} Log(F^{x}) = Log (\{ F^{x} \,|\, x \in W \})$.

$2. \,Log(\mathcal{C}) = Log(\{F^{x} \,|\, F\in \mathcal{C}, x \in F \})$

\end{lemma}

Let $ F = (W, R)$ be an $\mathbf{S4}$-frame, then the set of subsets $ T = \{U \subseteq W \,|\, R(U) \subseteq U \} $ defines a topology on W. Topological space $ (W, T) $ is denoted by $Top(F)$.$\,$ This topology is $Alexandroff$ (see \cite{20}) (since $R(x)$ is the minimal open neighborhood of $x$ for any $x$). 

Consider the interpretation of the language $\mathcal{ML}_2$ in topological spaces with a binary relation of the form $(\mathbb{X}, R)$, where $\square$ is interpreted in the same way as in topological semantics, and $[\ne]$ using $R$ as in Kripke semantics.$\, $

If the reflexive closure of the binary relation $R$ is the  universal relation (i.e., $R\cup Id_{W} = W\times W$), then the relation $R$ can be characterized by the set of all irreflexive points, which we call $selected\; points$.

The following lemma is well-known (cf. \cite{2}, \cite{3})

\begin{lemma}
(see \cite{1}, \cite{2}) Let $F = (W, R, R_D)$ be a Kripke frame, then

{\bf 1.} $F\vDash B_{D}\Leftrightarrow \forall x, y\in W \;(xR_{D}y \Rightarrow yR_{D}x)\Leftrightarrow R_D$ is symmetryc;

{\bf 2.} $F\vDash 4_{D}\Leftrightarrow R_{D}^{2} \subseteq R_{D}\cup Id_W \Leftrightarrow R_D \cup I_d$ is transitive;

{\bf 3.} $F\vDash 4_{\square}\Leftrightarrow \forall x, y, z\in W \;(xRy \;\&\; yRz\Rightarrow xRz)\Leftrightarrow R$ is transitive;

{\bf 4.} $F\vDash T_{\square}\Leftrightarrow \forall x\in W \;xRx \Leftrightarrow R$ is reflexive;

{\bf 5.} $F\vDash D_{\square}\Leftrightarrow R \subseteq R_{D}\cup Id_W$.
\end{lemma}

Now let $F = (W, R, R_D)$ be an $\mathbf{S4D}$-cone, so $R_D \cup I_{d_W} = W\times W $. We define a space with selected points $Top_{D}(F) \rightleftharpoons (Top(F), A) $, where $ A = \{v \; | \; \neg vR_{D} v \} $. Note that we may consider topological space $\mathbb{X}$ as ($\mathbb{X}, X$), where the domain $X$ is the set of selected points, in other words all the points are selected.

\begin{lemma} Let $(F, V)$ be a model on $F$, where $F = (W, R, R_D)$ is an $\mathbf{S4D}$-cone, then
$$F,V,x\vDash\phi\Leftrightarrow Top_{D}(F), V, x\vDash\phi, $$
for any $ x \in W $ and for any formula $ \phi $.
\end{lemma}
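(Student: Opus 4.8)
The plan is to proceed by induction on the structure of $\phi$, the only cases requiring thought being $\phi = \square\psi$ and $\phi = [\ne]\psi$. Throughout, write $\mathcal{M} = (F, V)$ for the given Kripke model and $\mathcal{N} = (Top_{D}(F), V)$ for the topological model with selected points; since the two carry the same valuation $V$, the cases $\phi = p$, $\phi = \bot$ and $\phi = \psi_1 \to \psi_2$ follow at once from the induction hypothesis, both semantics treating propositional letters and Boolean connectives in the same way.

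For $\phi = \square\psi$: as $F$ is an $\mathbf{S4D}$-cone, $R$ is reflexive and transitive, so $Top(F) = (W, T)$ is the Alexandroff space on $W$ in which $R(x)$ is the least open neighbourhood of $x$. Hence $\mathcal{N}, x \vDash \square\psi$ iff some open $U$ with $x \in U$ satisfies $\mathcal{N}, U \vDash \psi$, and by minimality of $R(x)$ this holds iff $\mathcal{N}, R(x) \vDash \psi$, i.e. iff $\mathcal{N}, y \vDash \psi$ for every $y$ with $xRy$. By the induction hypothesis the latter is equivalent to $\mathcal{M}, y \vDash \psi$ for every such $y$, that is, to $\mathcal{M}, x \vDash \square\psi$.

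For $\phi = [\ne]\psi$: I will use that on any $\mathbf{S4D}$-cone one has $R_D \cup Id_{W} = W \times W$, so that $R_D$ is completely recovered from its set of irreflexive points $A = \{v \mid \neg v R_D v\}$ — which is exactly the relation that $Top_{D}(F)$ attaches to the pair $(Top(F), A)$. Granting this, $\mathcal{N}, x \vDash [\ne]\psi$ iff $\mathcal{N}, y \vDash \psi$ for all $y$ with $x R_D y$, which by the induction hypothesis is equivalent to $\mathcal{M}, x \vDash [\ne]\psi$. To see the claim: $B_D$ makes $R_D$ symmetric and $4_D$ gives $R_D \circ R_D \subseteq R_D \cup Id_{W}$, so the reflexive closure $R_D^{+}$ is transitive; $D_{\square}$ gives $R \subseteq R_D^{+}$, so with $S = R \cup R_D$ we get $S \subseteq R_D^{+}$ and hence $S^{*} \subseteq R_D^{+}$, while $R_D^{+} \subseteq S^{*}$ trivially, so $S^{*} = R_D^{+}$. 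Since $F$ is the cone generated by some point $x_0$, $W = W^{x_0} = S^{*}(x_0) = R_D^{+}(x_0)$, and a short case check using symmetry of $R_D$ and $R_D \circ R_D \subseteq R_D \cup Id_{W}$ then gives $y R_D z$ for every pair of distinct $y, z \in W$.

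I expect the $[\ne]$-case to be the main obstacle: everything there hinges on the identity $S^{*} = R_D^{+}$ together with $W = W^{x_0}$, which is what ensures that collapsing $F$ to $(Top(F), A)$ loses no information about $R_D$, so that the difference modality really is interpreted identically on the two sides. The remaining cases are the standard Alexandroff translation of $\mathbf{S4}$ and pure bookkeeping.
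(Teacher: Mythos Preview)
Your proof is correct and follows exactly the approach the paper intends: induction on the construction of $\phi$, with the only nontrivial cases being $\square\psi$ (handled via the Alexandroff minimal neighbourhoods $R(x)$) and $[\ne]\psi$ (handled by showing that $R_D$ is precisely the relation recovered from the set $A$ of selected points). The paper merely asserts ``the standard proof is carried out by induction on the construction of a formula'' and separately states without proof that $R_D \cup Id_W = W \times W$ in an $\mathbf{S4D}$-cone, so your write-up in fact supplies strictly more detail than the paper does, including a clean argument for that auxiliary fact.
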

\begin{proof}The standard proof is carried out by the induction on the length of the formula. 
\end{proof}

\begin{corollary} Let $F=(W, R, R_D)$ be an $\mathbf{S4D}$-cone, then
$$Log(F) = Log(Top_{D}(F)).$$
\end{corollary}

\begin{lemma}
Let $F = (W, R, R_D)$ be an $\mathbf{S4D}$-cone, then: 
\begin{center}
$F \vDash AT_0  \Longleftrightarrow  \forall x, y \in W (x \neq y \wedge xRy \wedge yRx \Longrightarrow xR_Dx \vee yR_Dy)$
\end{center}
\end{lemma}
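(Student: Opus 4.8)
The plan is to establish the equivalence by the usual frame-correspondence argument: for $(\Leftarrow)$ I reason semantically inside an arbitrary model whose designated point satisfies the antecedent of $AT_0$, and for $(\Rightarrow)$ I turn a failure of the right-hand condition into a valuation refuting $AT_0$. Throughout I rely on the properties of an $\mathbf{S4D}$-cone already recorded above: $R$ is reflexive and transitive, $R_D\cup Id_W = W\times W$, and $R\subseteq R_D\cup Id_W$ (from $D_\square$). In particular $R_D(x)\supseteq W\setminus\{x\}$ for every $x$, so $[\neq]\psi$ at $x$ amounts to ``$\psi$ holds at every point $\ne x$ (and also at $x$ itself if $xR_Dx$)'', and $xRy$ with $x\neq y$ forces $xR_Dy$.

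$(\Leftarrow)$ Assume the right-hand condition and let $(F,V)$ be a model with $F,V,x\vDash p\wedge[\neq]\neg p\wedge\langle\neq\rangle(q\wedge[\neq]\neg q)$. I first read off the shape of $V$. From $x\vDash[\neq]\neg p$ together with $R_D(x)\supseteq W\setminus\{x\}$ we get $V(p)\subseteq\{x\}$, hence $V(p)=\{x\}$ since $x\vDash p$; moreover $\neg xR_Dx$, for otherwise $x\vDash\neg p$. The conjunct $\langle\neq\rangle(q\wedge[\neq]\neg q)$ yields a point $y$ with $xR_Dy$ (so $y\neq x$) and $y\vDash q\wedge[\neq]\neg q$, and the same reasoning gives $V(q)=\{y\}$ and $\neg yR_Dy$. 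Now split on whether $x\vDash\square\neg q$. If so, the first disjunct of the consequent holds. Otherwise there is $z$ with $xRz$ and $z\vDash q$; as $V(q)=\{y\}$ this forces $z=y$, i.e.\ $xRy$. Applying the frame condition to $x,y$ and using $\neg xR_Dx$, $\neg yR_Dy$ we conclude $\neg yRx$; hence every $R$-successor of $y$ differs from $x$ and so fails $p$, i.e.\ $y\vDash\square\neg p$. Together with $xR_Dy$ and $y\vDash q$ this gives $x\vDash\langle\neq\rangle(q\wedge\square\neg p)$, the second disjunct. So $AT_0$ is true at $x$, and since $x$ and $V$ were arbitrary, $F\vDash AT_0$.

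$(\Rightarrow)$ I argue contrapositively. Suppose the right-hand condition fails, i.e.\ there are distinct $x,y$ with $xRy$, $yRx$, $\neg xR_Dx$ and $\neg yR_Dy$. Put $V(p)=\{x\}$ and $V(q)=\{y\}$. Using $\neg xR_Dx$ one checks $x\vDash p\wedge[\neq]\neg p$; using $xR_Dy$ (valid since $xRy$, $x\neq y$, $R\subseteq R_D\cup Id_W$) together with $\neg yR_Dy$, the point $y$ witnesses $x\vDash\langle\neq\rangle(q\wedge[\neq]\neg q)$. So the antecedent holds at $x$. For the consequent, $xRy$ with $y\vDash q$ refutes $\square\neg q$ at $x$; and any $R_D$-successor of $x$ satisfying $q$ must equal $y$, while $yRx$ and $x\vDash p$ refute $\square\neg p$ at $y$, so $\langle\neq\rangle(q\wedge\square\neg p)$ fails at $x$ as well. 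Hence $F\nvDash AT_0$.

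The individual verifications are routine; the only point that takes a little care is that the conjunct $p\wedge[\neq]\neg p$ (and symmetrically $q\wedge[\neq]\neg q$) does two jobs at once — it collapses $V(p)$ to the single point $x$ \emph{and} forces $x$ to be $R_D$-irreflexive — and it is precisely these two irreflexivities that let the frame condition fire and yield $\neg yRx$ in the $(\Leftarrow)$ direction. One reading convention worth fixing in advance: the right-hand condition is meant for $x\neq y$ (taking $x=y$, reflexivity of $R$ would make it demand $R_D$-reflexivity of $x$, which $AT_0$ does not force), and in both directions of the proof only the case $x\neq y$ is used.
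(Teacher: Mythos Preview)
Your proof is correct and follows essentially the same approach as the paper: for $(\Rightarrow)$ both construct the singleton valuation $V(p)=\{x\}$, $V(q)=\{y\}$ from a pair of mutually $R$-accessible $R_D$-irreflexive points, and for $(\Leftarrow)$ both extract $V(p)=\{x\}$, $V(q)=\{y\}$ from the antecedent and use the frame condition to rule out mutual $R$-accessibility. Your write-up is more careful than the paper's in two respects: you make explicit the case split on $x\vDash\square\neg q$, and your closing remark about restricting the frame condition to $x\neq y$ is a genuine correction, since with $x=y$ and $R$ reflexive the stated condition would force every point to be $R_D$-reflexive, which $AT_0$ does not entail.
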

\begin{proof}
 Suppose there are two points $x$, $y$ such that they both are irreflexive with respect to the second relation ($R_{D}$-irreflexive) and mutually reachable by the first relation. We define a model $M = (F, V) $ by defining valuation as follows: $ V(p) = \{ x \}, V(q) = \{ y \}$. Then
\begin{center}

$M, x \models p\wedge \lbrack \neq \rbrack \neg p \wedge \langle \neq \rangle (q\wedge \lbrack \neq \rbrack \neg q)$
and
\end{center}
\begin{center}

$M, x \nvDash\square\neg q\vee\langle \neq \rangle(q\wedge\square\neg p)$. 

\end{center}

Conversely, suppose $ M, x\models p\wedge\lbrack\neq\rbrack\neg p\wedge\langle\neq\rangle(q\wedge\lbrack \neq\rbrack\neg q) $. Then $ V(p) = \{x\}$ and there is a point $y$ such that $V(q) = \{y\}$. These points are $R_{D}$-irreflexive. By assumption, they are not mutually accessible by the first relation. We consider 2 cases: 

1. $\neg xRy$. Then

\begin{center}

$M, y \models \square\neg q  \Rightarrow  M, x \models\square\neg q\vee\langle \neq \rangle(q\wedge\square\neg p)$. 
\end{center}

2. $\neg yRx$. Then

\begin{center}

$M, y \models q\wedge\square\neg p \Rightarrow M, x \models \langle \neq \rangle(q\wedge\square\neg p) \Rightarrow  M, x \models\square\neg q\vee\langle \neq \rangle(q\wedge\square\neg p)$. 
\end{center}
\end{proof}

\section{p-morphism}

For two topological spaces $\mathbb{X}$ and $\mathbb{Y}$ a map $f: \mathbb{X} \rightarrow \mathbb{Y}$ is said to be $continuous$ if for every open subset $U\subset \mathbb{Y}$, the inverse image $f^{-1}(U)\subset \mathbb{X}$ is open in $\mathbb{X}$. Map $f$ is said to be $open$ if for every open set $U$ in $\mathbb{X}$, $f(U)$ is open in $\mathbb{Y}$. We call $f$ $interior$ if it is both open and continuous. 

\begin{definition}A map between topological spaces $ f: \mathbb{X}\rightarrow \mathbb{Y} $ is called $p$-$morphism$ if it is surjective and interior (notation: $f:\mathbb{X} \twoheadrightarrow \mathbb{Y}$).
\end{definition}

\begin{definition}A map between topological spaces with selected points $ \mathcal{X} = (\mathbb{X},\, A_{\mathbb{X}}) $ and $ \mathcal{Y} = (\mathbb{Y},\, A_{\mathbb{Y}}) $ is called a p-morphism if it is a p-morphism of topological spaces $f:\mathbb{X}\twoheadrightarrow \mathbb{Y}$, and $$ A_{\mathbb{Y}} = \{y \; | \; \exists x \in A_{\mathbb{X}}\, (f^{-1}(y) = \{x \}) \}$$
\end{definition}
\begin{lemma}(cf.\cite{12})For a given map $f:X\rightarrow Y$ the following statements are equivalent:

1. $f$ is interior;

2. $ f^{-1}(Int\,Z) = Int\,f^{-1}(Z)$ for any $Z\subseteq Y$;

3. $f^{-1}(Cl\,Z) = Cl\,f^{-1}(Z)$ for any $Z\subseteq Y$.
\end{lemma}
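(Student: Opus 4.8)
The plan is to establish the cycle $(1)\Rightarrow(2)\Rightarrow(1)$ together with the equivalence $(2)\Leftrightarrow(3)$, which yields the full equivalence. Throughout I will use only the two elementary set-theoretic facts $A\subseteq f^{-1}(f(A))$ for $A\subseteq X$ and $f(f^{-1}(B))\subseteq B$ for $B\subseteq Y$, together with the characterisation of $IZ$ as the largest open set contained in $Z$.

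\textbf{$(1)\Rightarrow(2)$.} Fix $Z\subseteq Y$. Since $IZ$ is open and $f$ is continuous, $f^{-1}(IZ)$ is open; as $IZ\subseteq Z$ it is an open subset of $f^{-1}(Z)$, so $f^{-1}(IZ)\subseteq If^{-1}(Z)$. For the reverse inclusion, $If^{-1}(Z)$ is open, so by openness of $f$ the set $f(If^{-1}(Z))$ is open in $Y$; it is contained in $f(f^{-1}(Z))\subseteq Z$, hence in $IZ$. Applying $f^{-1}$ and using $If^{-1}(Z)\subseteq f^{-1}(f(If^{-1}(Z)))$ gives $If^{-1}(Z)\subseteq f^{-1}(IZ)$, whence equality.

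\textbf{$(2)\Rightarrow(1)$.} For continuity, apply $(2)$ with $Z=U$ an open subset of $Y$: then $f^{-1}(U)=f^{-1}(IU)=If^{-1}(U)$ is open. For openness, let $V\subseteq X$ be open and apply $(2)$ with $Z=f(V)$: from $V\subseteq f^{-1}(f(V))$ and the openness of $V$ we get $V\subseteq If^{-1}(f(V))=f^{-1}(If(V))$, whence $f(V)\subseteq f(f^{-1}(If(V)))\subseteq If(V)$, so $f(V)$ is open.

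\textbf{$(2)\Leftrightarrow(3)$.} Recall $CZ=Y\setminus I(Y\setminus Z)$, the analogous identity for subsets of $X$, and that $f^{-1}$ commutes with complementation, $f^{-1}(Y\setminus Z)=X\setminus f^{-1}(Z)$. Substituting $Y\setminus Z$ for $Z$ in $(2)$ and taking complements in $X$ converts it into $f^{-1}(CZ)=Cf^{-1}(Z)$; since every step is reversible, $(2)$ and $(3)$ are equivalent.

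I do not expect a genuine obstacle here: this is the standard ``three equivalent ways of saying $f$ is interior'' lemma, and the only care needed is to track which of the containments $A\subseteq f^{-1}(f(A))$ and $f(f^{-1}(B))\subseteq B$ is invoked at each image/preimage step. In particular, surjectivity of $f$ is not needed for this lemma; it will only enter later, in the definition of a $p$-morphism.
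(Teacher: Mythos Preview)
Your argument is correct: the cycle $(1)\Rightarrow(2)\Rightarrow(1)$ and the duality $(2)\Leftrightarrow(3)$ are carried out cleanly, and the image/preimage containments are invoked in the right directions. The paper does not supply its own proof of this lemma at all---it merely states the result with a reference to Rasiowa--Sikorski---so there is no in-paper argument to compare against; your proof simply fills in the standard details that the citation stands in for.
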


\begin{lemma} Let $ \mathcal{X} = (\mathbb{X},\, A_{\mathbb{X}}) $ and $\mathcal{Y} =  (\mathbb{Y},\, A_{\mathbb{Y}}) $ be topological spaces with selected points and $ f: \mathbb {X} \twoheadrightarrow \mathbb {Y} $ be a p-morphism. Let $ V_{\mathbb {Y}}$ be a valuation on topological space  $\mathbb{Y}$ and $ V_{\mathbb {X}} (p) = f ^ {-1} (V_{\mathbb {Y}}(p))$ for all $p \in PV $. Then for any formula $\phi $ the following holds:
$$\forall x \in \mathbb{X}\; (\;\mathcal{X}, V_{\mathbb{X}}, x\vDash\phi\Leftrightarrow \mathcal{Y}, V_{\mathbb{Y}}, f(x)\vDash\phi).$$
\end{lemma}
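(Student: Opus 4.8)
The plan is to proceed by induction on the structure of $\phi$, treating the statement as the natural bimodal-with-selected-points upgrade of the classical p-morphism lemma. The base case $\phi = p$ is just unwinding: $\mathcal{X}, V_{\mathbb{X}}, x\vDash p$ iff $x \in V_{\mathbb{X}}(p) = f^{-1}(V_{\mathbb{Y}}(p))$ iff $f(x) \in V_{\mathbb{Y}}(p)$ iff $\mathcal{Y}, V_{\mathbb{Y}}, f(x)\vDash p$, and the cases $\bot$ and $\psi_1 \to \psi_2$ fall out of the induction hypothesis with no work. For $\phi = \square\psi$ I would phrase the induction hypothesis as ``the truth set of $\psi$ in $\mathcal{X}$ equals the $f$-preimage of the truth set of $\psi$ in $\mathcal{Y}$'', and then invoke the preceding lemma characterising interior maps in the form $f^{-1}(I Z) = I f^{-1}(Z)$, applied to $Z$ the truth set of $\psi$ in $\mathcal{Y}$: since $\square$ is the interior operator, membership of $x$ in the interior of the truth set of $\psi$ transfers along $f$ exactly as needed.

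The real content is the case $\phi = [\neq]\psi$, where I would use the semantics of $[\neq]$ on a space with selected points consistent with the construction $Top_D$: $[\neq]\psi$ holds at a point $z$ iff $\psi$ holds at every point distinct from $z$ and, when $z$ is not selected, also at $z$ itself. Before carrying out the induction step here I would record the key fact extracted from the definition of $A_{\mathbb{Y}}$: for every $x \in \mathbb{X}$, one has $f(x) \in A_{\mathbb{Y}}$ if and only if $f^{-1}(f(x)) = \{x\}$ and $x \in A_{\mathbb{X}}$. Two consequences drive the argument: if the fibre $f^{-1}(f(x))$ is not the singleton $\{x\}$ then $f(x) \notin A_{\mathbb{Y}}$, and if $x \notin A_{\mathbb{X}}$ then $f(x) \notin A_{\mathbb{Y}}$.

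With this in hand, both directions of the $[\neq]$ step are routine. For the forward direction, assuming $[\neq]\psi$ holds at $x$ in $\mathcal{X}$: any $y' \neq f(x)$ equals $f(x')$ for some $x'$ by surjectivity, with $x' \neq x$, so $\psi$ holds at $x'$ and hence at $y'$ by the induction hypothesis; and if $f(x) \notin A_{\mathbb{Y}}$ I still need $\psi$ at $f(x)$, i.e.\ at $x$ --- which is true already when $x \notin A_{\mathbb{X}}$, and otherwise $x \in A_{\mathbb{X}}$ forces, by the key fact, a second point $x'' \neq x$ in $f^{-1}(f(x))$, whence $\psi$ at $x''$ and so $\psi$ at $f(x'') = f(x)$. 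The backward direction is the mirror image, using the non-triviality of a fibre (when $f(x') = f(x)$ for some $x' \neq x$) and the hypothesis $x \notin A_{\mathbb{X}}$ to push the ``here'' clause of $[\neq]$ at $f(x)$ back down to $x$. The one genuinely delicate point --- and the main obstacle --- is exactly this bookkeeping with the selected-point sets in the $[\neq]$ case: one must verify that the ``everywhere except here'' reading on the two spaces matches up despite $f$ collapsing fibres, and the key fact above is precisely what guarantees that the ``here'' exception disappears on both sides simultaneously whenever the relevant fibre is non-trivial.
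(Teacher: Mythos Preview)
Your proposal is correct and follows essentially the same route as the paper: induction on $\phi$, the $\square$ case handled via the interior--preimage commutation lemma, and the $[\neq]$ case done by a case analysis on membership in $A_{\mathbb X}$ and $A_{\mathbb Y}$ together with the fibre condition coming from the definition of $A_{\mathbb Y}$. Your explicit isolation of the ``key fact'' $f(x)\in A_{\mathbb Y}\iff f^{-1}(f(x))=\{x\}\ \text{and}\ x\in A_{\mathbb X}$ is a mild improvement in organisation over the paper, which leaves two of the three cases as ``obvious'' and only writes out the $x\in A_{\mathbb X},\ f(x)\notin A_{\mathbb Y}$ case; the underlying argument is the same.
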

\begin{proof} The statement of the lemma can be rewritten as follows:

\begin{center}
    
    $V_{\mathbb{X}}(\phi) = f^{-1}(V_{\mathbb{Y}}(\phi)) $
    
\end{center}

The proof proceeds in a straightforward way by induction on the length of the formula. Let us consider cases when $\phi = \square\psi$ and $\phi = [\ne]\psi$. The other cases are trivial.

Suppose that $\phi = \square\psi$. First, we use the assertion of the lemma 2.1, then the previous lemma.

$f^{-1}(V_{\mathbb{Y}}(\square \psi))  = f^{-1}(Int\,  (V_{\mathbb{Y}}(\psi))) = Int\, (f^{-1}(V_{\mathbb{Y}}(\psi))) \stackrel{\text{IH}}= Int\,(V_{\mathbb{X}}(\psi)) = V_{\mathbb{X}}( \square \psi)$.

Now we turn to $[\ne]$. Suppose $\mathcal{Y}, V_{\mathbb{Y}}, f(x)\vDash[\ne]\psi$. If $f(x)\in A_{\mathbb{Y}}$, then $$\forall z \ne f(x)\,(\mathcal{Y}, V_{\mathbb{Y}}, z\vDash\psi)\xRightarrow[]{IH} \forall y \ne x\,(\mathcal{X}, V_{\mathbb{X}}, y\vDash\psi)\Rightarrow\mathcal{X}, V_{\mathbb{X}}, x\vDash\ [\ne]\psi.$$ If $f(x)\notin A_{\mathbb{Y}}$, then $$\forall z \,(\mathbb{Y}, V_{\mathbb{Y}}, z\vDash\psi)\xRightarrow[]{IH}\forall y \,(\mathcal{X}, V_{\mathbb{X}}, y\vDash\psi)\Rightarrow\mathcal{X}, V_{\mathbb{X}}, x\vDash\ [\ne]\psi.$$
Suppose $\mathcal{X}, V_{\mathbb{X}}, x\vDash[\ne]\psi$. There are 3 cases: 

1. $x\in A_{\mathbb{X}} \: \& \: f(x)\in A_{\mathbb{Y}}$

2. $x\notin A_{\mathbb{X}} \:\&\: f(x)\notin A_{\mathbb{Y}}$

3. $x\in A_{\mathbb{X}} \:\&\: f(x)\notin A_{\mathbb{Y}}$

The first two cases are obvious, so we only consider the last case. By the definition of p-morphism, there is another point $x'$, such that $f(x) = f(x')$. Then $$\mathcal{X}, V_{\mathbb{X}}, x\vDash [\ne]\psi\Rightarrow \mathcal{X}, V_{\mathbb{X}}, x^{\prime}\vDash\psi \xRightarrow[]{IH} \mathcal{Y}, V_{\mathbb{Y}}, f(x)\vDash\psi\Rightarrow \mathcal{X}, V_{\mathbb{X}}, x\vDash \psi.$$ 

It follows that, 
$$\forall z(\mathcal{X}, V_{\mathbb{X}}, z\vDash\psi)\xRightarrow[]{f-surjective}\forall y( \mathcal{Y}, V_{\mathbb{Y}}, y \vDash\psi)\Rightarrow\mathcal{Y}, V_{\mathbb{Y}}, f(x) \vDash[\ne]\psi.$$ \end{proof}

\section{Canonical frames and Kripke completeness}

The axioms $T_{\square},  4_{\square}, D_{\square}, B_{D}, 4_{D}$ are Sahlqvist formulas, so by the Sahlqvist theorem we obtain the canonicity and Kripke completeness for logic $\mathbf{S4D}$ (see \cite{2}).
To prove the Kripke completeness of logic $\mathbf{S4DT_0}$, we use the canonical model construction.

\begin{definition} The $canonical\; frame$ for $L$ is $F_L = (W_L, R_{1,L}, \dots, R_{n,L})$,  where $W_L$ is  the  set  of  all  maximal consistent theories over $L$ (see \cite{19}) and $xR_{iL}y$ if for every $\square_{i} A\in x$ we have $A\in y$.
\end{definition}
\begin{definition} The canonical model for $L$ is a model $M_L$ on the frame $F_L$ with a valuation function $V_L$ such that $V_{L}(p) = \{x \,|\, p \in x\}.$
\end{definition}
\begin{theorem}($\bf{Canonical\: Model\: Theorem}$, cf. \cite{2}, \cite{3}) For the modal logic $L$ and its canonical model $ M_{L} = (W_L, R_{1,L}, \dots, R_{n,L}, V_L) $, it is true that, $\forall\phi\;\forall x \in W$

i.\,$M_L, x\vDash \phi\Leftrightarrow \phi \in x$

ii.\,$M_L\vDash \phi\Leftrightarrow \phi \in L$
\end{theorem}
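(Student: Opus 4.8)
The plan is to establish the \emph{Truth Lemma}: for every formula $\phi$ and every $x \in W_L$ one has $M_L, x \vDash \phi \iff \phi \in x$. Granting this, the second assertion is immediate, since $M_L \vDash \phi$ means that $\phi$ belongs to every maximal $L$-consistent set, and a routine Lindenbaum argument (if $\phi \notin L$ then $\{\neg\phi\}$ is $L$-consistent and extends to some $x \in W_L$ with $\phi \notin x$) shows this happens exactly when $\phi \in L$. So everything reduces to the Truth Lemma, which I would prove by induction on the construction of $\phi$.

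For the base cases, $\phi = p$ holds by the definition of $V_L$, and $\phi = \bot$ holds vacuously, since no consistent theory contains $\bot$. The Boolean step $\phi = \psi_1 \to \psi_2$ uses only elementary properties of maximal $L$-consistent sets: each such set is deductively closed, contains exactly one of $A$ and $\neg A$, and contains $A \to B$ iff it omits $A$ or contains $B$; combining these with the induction hypothesis gives the claim.

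The modal step is the heart of the argument. Consider $\phi = \square\psi$ (the case $\phi = [\ne]\psi$ is handled identically, working with the companion relation $R_{D,L}$ on $W_L$ defined by $x R_{D,L} y \iff \{A : [\ne]A \in x\} \subseteq y$, so that $[\ne]$ is the box of $R_{D,L}$). One direction is easy: if $\square\psi \in x$ and $x R_L y$, then $\psi \in y$ by the definition of $R_L$, hence $M_L, y \vDash \psi$ by the induction hypothesis, and as $y$ was arbitrary, $M_L, x \vDash \square\psi$. For the converse I argue contrapositively: assume $\square\psi \notin x$, so $\neg\square\psi = \Diamond\neg\psi \in x$. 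The key claim---the \emph{Existence Lemma}---is that $\Gamma = \{A : \square A \in x\} \cup \{\neg\psi\}$ is $L$-consistent; one then extends $\Gamma$ to a maximal $L$-consistent set $y$ by Lindenbaum's lemma, observes $x R_L y$ by construction, and concludes $M_L, y \vDash \neg\psi$ by the induction hypothesis, so $M_L, x \nvDash \square\psi$.

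I expect the consistency of $\Gamma$ to be the only real obstacle. If it failed, there would be $A_1, \dots, A_n$ with $\square A_i \in x$ and $L \vdash (A_1 \wedge \cdots \wedge A_n) \to \psi$; applying the rule $\frac{A}{\square A}$ and then the normality axiom for $\square$ repeatedly yields $L \vdash (\square A_1 \wedge \cdots \wedge \square A_n) \to \square\psi$, whence $\square\psi \in x$ by deductive closure, contradicting $\square\psi \notin x$. The same computation, with $[\ne]$ and its normality axiom in place of $\square$, disposes of the difference-modality case. Note that no specific axiom of $\mathbf{S4DT_0}$ enters here: the theorem holds for an arbitrary normal bimodal logic $L$, which is why it is quoted from \cite{2}, \cite{3}.
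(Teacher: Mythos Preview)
Your proof is correct and is precisely the standard argument found in the cited references \cite{2}, \cite{3}: Truth Lemma by induction on formulas, with the Existence Lemma handling the modal step, and the second assertion deduced via Lindenbaum. The paper itself gives no proof of this theorem---it merely states it with the citation ``cf.\ \cite{2}, \cite{3}''---so there is nothing to compare against beyond noting that you have reproduced the textbook proof those references contain.
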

\begin{lemma}$\mathbf{S4DT_0}$ logic is Kripke complete.
\end{lemma}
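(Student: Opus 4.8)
The plan is to prove Kripke completeness by the standard canonical model method, establishing that $\mathbf{S4DT_0} = L(F_L)$ for the canonical frame $F_L$. First I would fix the canonical frame as $F_L = (W_L, R_L, R_D)$, where $R_L$ is as in the Definition and the second canonical relation is given by $x R_D y$ iff $A \in y$ for every $[\ne]A \in x$. Since $T_\square, 4_\square, D_\square, B_D, 4_D$ are Sahlqvist formulas, they are canonical, so $F_L$ is already an $\mathbf{S4D}$-frame: $R_L$ is reflexive and transitive, $R_D$ is symmetric, $R_D^2 \subseteq R_D \cup Id$, and $R_L \subseteq R_D \cup Id$. By the Canonical Model Theorem, $\phi \notin \mathbf{S4DT_0}$ implies $M_L \nvDash \phi$, and since $M_L$ is a model on $F_L$ this yields $\phi \notin L(F_L)$. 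Hence $L(F_L) \subseteq \mathbf{S4DT_0}$, and the whole problem reduces to the soundness direction $\mathbf{S4DT_0} \subseteq L(F_L)$, i.e. to showing that $F_L$ validates $AT_0$.

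To obtain $F_L \vDash AT_0$ I would pass to cones. By the decomposition lemma, $L(F_L) = \bigcap_{x} L(F_L^x)$, so it suffices to show $F_L^x \vDash AT_0$ for every cone. Each such cone is genuinely an $\mathbf{S4D}$-cone with $R_D \cup Id$ universal: from $R_L \subseteq R_D \cup Id$ one gets $S = R_L \cup R_D \subseteq R_D \cup Id$, while $R_D \cup Id$ is an equivalence relation (reflexive by definition, symmetric by $B_D$, and transitive because $(R_D \cup Id)^2 = R_D^2 \cup R_D \cup Id \subseteq R_D \cup Id$ by $4_D$); hence $S^* = R_D \cup Id$ and each cone is exactly one $R_D \cup Id$-class, on which $R_D \cup Id$ is the universal relation. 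The Lemma characterising $AT_0$ on $\mathbf{S4D}$-cones therefore applies, so it is enough to verify the first-order condition $\Phi : \forall x \ne y\,(xR_Ly \wedge yR_Lx \Rightarrow xR_Dx \vee yR_Dy)$ on $F_L$ itself; this passes to every cone by restriction, and by the cited Lemma each cone then validates $AT_0$.

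The heart of the argument, and the only step that is not routine since $AT_0$ is not Sahlqvist, is the verification of $\Phi$ on the canonical frame. I would argue by contradiction: suppose $x \ne y$ are both $R_D$-irreflexive with $xR_Ly$ and $yR_Lx$. From $\neg(xR_Dx)$ there is a formula with $[\ne]A \in x$ and $A \notin x$; setting $\alpha = \neg A$ gives $\alpha \wedge [\ne]\neg\alpha \in x$, and symmetrically there is $\beta$ with $\beta \wedge [\ne]\neg\beta \in y$. Since $x \ne y$ and $R_L \subseteq R_D \cup Id$, we have $xR_Dy$, whence $\langle\ne\rangle(\beta \wedge [\ne]\neg\beta) \in x$, so the antecedent of the instance of $AT_0$ with $p \mapsto \alpha$, $q \mapsto \beta$ lies in $x$. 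As $AT_0 \in \mathbf{S4DT_0} \subseteq x$, its consequent $\square\neg\beta \vee \langle\ne\rangle(\beta \wedge \square\neg\alpha)$ is in $x$. If $\square\neg\beta \in x$, then $xR_Ly$ forces $\neg\beta \in y$, contradicting $\beta \in y$. Otherwise $\langle\ne\rangle(\beta \wedge \square\neg\alpha) \in x$ yields $z$ with $xR_Dz$ and $\beta \wedge \square\neg\alpha \in z$; symmetry gives $zR_Dx$, and $R_D^2 \subseteq R_D \cup Id$ then gives $z = y$ or $zR_Dy$. If $z = y$, then $\square\neg\alpha \in y$ and $yR_Lx$ force $\neg\alpha \in x$, contradicting $\alpha \in x$; if $zR_Dy$, then by symmetry $yR_Dz$, so $[\ne]\neg\beta \in y$ gives $\neg\beta \in z$, contradicting $\beta \in z$. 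Every branch is contradictory, so $\Phi$ holds.

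Combining the three steps, every cone $F_L^x$ validates $AT_0$, hence $F_L \vDash AT_0$; together with canonicity of the $\mathbf{S4D}$ axioms this gives $\mathbf{S4DT_0} \subseteq L(F_L)$, and with the reverse inclusion above we conclude $\mathbf{S4DT_0} = L(F_L)$. Thus $\mathbf{S4DT_0}$ is the logic of a single frame, and in particular Kripke complete. I expect the extraction of the witnessing formulas $\alpha, \beta$ from $R_D$-irreflexivity and the bookkeeping of the $R_D$-cases in the third step to carry all the genuine content; the cone reduction and the appeal to Sahlqvist canonicity are standard.
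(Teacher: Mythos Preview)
Your proposal is correct and follows essentially the same route as the paper: Sahlqvist canonicity for the $\mathbf{S4D}$-axioms, reduction to cones via $L(F_L)=\bigcap_x L(F_L^x)$, and then a contradiction from two distinct $R_D$-irreflexive, mutually $R_L$-reachable points using witness formulas extracted from irreflexivity. The only difference is cosmetic: the paper phrases the contradiction semantically in the canonical model (the witness formulas make the $AT_0$-instance false at $z$, contradicting $M_L\vDash AT_0$), whereas you run the same argument syntactically by case-splitting on which disjunct of the consequent lies in $x$; your handling of the witness $z$ for $\langle\ne\rangle(\beta\wedge\Box\neg\alpha)$ via $R_D^2\subseteq R_D\cup Id$ is in fact more explicit than the paper's, which tacitly relies on $\beta$ being true only at $y$ within the cone.
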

\begin{proof}We take the canonical model $ M_L = (F_L, V_L) $ of logic $\mathbf L = {S4DT_0}$. By the Sahlqvist theorem $ F $  is an $\mathbf{S4D}$-frame (see \cite{2} and \cite{14}). Consider a cone $ M = M_{L}^x $ and assume there exist different points $z$ and $y$ such that they are $R_{D}$-irreflexive  $ (\neg zR_{D}z $ and $ \neg yR_{D}y) $ and are mutually reachable by the first relation (i.e. $ zRy \wedge yRz $). Note that, by definition of the canonical model $( \neg zR_{D}z \Longleftrightarrow \exists \phi (\lbrack \neq \rbrack \phi \in z \; \& \; \phi \notin z) $. But on the other hand $ \phi $ is true at all the other points of the cone. Hence, $ \neg \phi $ is false everywhere, except for the point $ z $. Similarly, it can be shown that there exists a formula $ \psi $ that is true only at $ y $. Hence,

$$
M, z \models\neg\phi\wedge[\neq]\phi \wedge\langle \neq \rangle(\psi\wedge[\neq]\neg \psi)
$$
On the other hand,

$$M, z \nvDash\square\neg \psi \vee\langle \neq \rangle(\psi\wedge\square\phi).$$
As a result, assuming the opposite, we get $M, z\nvDash AT_0$, which contradicts to the previous theorem.
\end{proof}

\section{Finite model property}

A formula is $satisfiable\,\, in\,\, a \,\, frame\,\, F\,(in\,\, a \,\,class\,\,of\,\, frames\,\, \mathbb{C})$ if it is true at a point of some model over $F $ (over some frame in $\mathbb{C})$.

\begin{definition}

Logic $L$ has the $finite\, model\, property$ if $L = L(\mathbb{C})$ for some class of finite frames $\mathbb{C}$.

\end{definition}
Logic $L = L(\mathbb{C})$ has the finite model property if and only if each satisfiable in $\mathbb{C}$ formula is satisfiable in a finite $L$-frame.

\begin{definition}

Let us consider a frame $F = (W, R_1, R_2)$ and an equivalence relation $\sim$ on $W$. A frame $F/{\sim} = (W/{\sim},\, R_1/{\sim},\,R_2/{\sim} )$ is said to be the $minimal \,\, filtration$ of $F$ through ${\sim}$, if for $U_1, U_2\in W/{\sim}$ and $i = 1, 2$
\begin{center}
    $U_1R_i/{\sim} U_2 \Leftrightarrow \exists u\in U_1\, \exists v \in U_2\, uR_iv$
\end{center}

\end{definition}

\begin{definition}

Let M be a model and $\phi$ be a formula. We define the equivalence $\sim_{\phi}\,\, induced\,\,by\,\,the\,\,formula\,\,\phi$ on the points of M as follows: $u\sim_{\phi} v$ iff every subformula of $\phi$ is simultaneously true or false in u and in v.

\end{definition}

We say that an equivalence $\sim$ $\textit{agrees with  formula $\phi$ in a model}$ if $\sim \subseteq\sim_{\phi}$.

\begin{lemma}(cf. \cite{13})
If a formula $\phi$ is satisfiable in a model $M$ over a frame $F$ and an  equivalence ${\sim}$ agrees with formulas $\phi$, then $\phi$ is satisfiable in $F/{\sim}$.
\end{lemma}

 A \emph{partition of a set} $W$ is a family of disjoint subsets of $W$ whose union coincides with $W$. If $\mathbb{A}$ and $\mathbb{B}$ are partitions of a set $W$ and each element of $\mathbb{A}$ is a subset of one element from $\mathbb{B}$, then we say $\mathbb{A}$ is a \emph{refinement} of $\mathbb{B}$. We denote by ${\sim}_\mathbb{A}$ the equivalence relation whose set of classes coincides with $\mathbb{A}: \mathbb{A} =W/{\sim}_\mathbb{A}$. We write $F_{\mathbb{A}}$ and $R_{\mathbb{A}}$ instead of $F/{\sim}_\mathbb{A}$ and $R/{\sim}_\mathbb{A}$.
 
 \begin{theorem}

$\mathbf{S4DT_0}$ has the finite model property.

\end{theorem}

\begin{proof}

Let $F = (W, R, R_D)$ be an $\mathbf{S4DT_0}$-cone, and a formula $\phi$ is satisfiable in it. We will show that there is a finite $\mathbf{S4DT_0}$-frame in which $\phi$ is satisfiable. First we construct the minimal filtration of $M = (F, V)$ ($\exists x\in\;W\;(M,\;x\vDash \phi)$) via $\sim_\phi$ and denote the resulting model as $M' = (F', V')$, where $F^{\prime} = (W',R',R'_{D})$.{ }The points of $F'$ we will call $equivalence\,\, classes$ or $classes$. Since all the different points of $W$ see each other by the second relation, then each $R'_D$-irreflexive class consists of a single $R_D$-irreflexive point.

For validity of formulas $T_{\square}, D_{\square}, B_D $ and $4_D$ in the resulting frame [see \cite{20}, page 48]. $T_0$  is preserved by minimal filtration, since if there are $R_D'$-irreflexive classes mutually reachable by the first relation we will get a contradiction with the theorem 4.5. The resulting frame may not be $R'$-transitive (Fig.1). To satisfy $4_{\square}$ axiom we can consider transitive closure of the minimal filtration. But if we do this, $T_{\square}, D_{\square}, B_D $ and $4_D$ axioms will still be valid, but $T_0$ may become false. Indeed, consider Fig.1.

\begin{center}
\begin{tikzpicture}
\draw (0,0) ellipse (0.5cm and 0.9cm);
\node [below] at (0,0.5) {$x$};
\node [] at (0,0.5) {\textbullet };

\draw (2.5,0) ellipse (0.5cm and 0.9cm);
\node [] at (2.55, 0.45) {\textbullet {$y$}};
\node at (0,-1.25) {$[x]$};
\node at (2.5, 1.25) {$[y] = [z]$};
\node at (5, 1.25) {$[u]$};
\draw[thick,->] ((0,0.5) -- (2.36, 0.5) node[anchor=north west] {};
\node at (1.2,0.7) {$R$};

\draw (5, 0) ellipse (0.5cm and 0.9cm);
\node [] at (2.55, -0.5) {\textbullet };
\node [below] at (2.55, -0.5) {z};
\draw[thick,->] (2.5, -0.5) -- (4.9, -0.5) node[anchor=north west] {};
\node at (3.74, -0.3) {$R$};
\node [below] at (2.55, -0.5) {$z$};
\node [below] at (5, -0.5) {$u$};
\node [] at (5, -0.5) {\textbullet};
\end{tikzpicture}

Fig. 1

\end{center}

If we take the transitive closure of $R'$ (denote as $R''$) we may have two mutually  reachable by $R''$ $R'_D$-irreflexive classes.

\begin{center}
\begin{tikzpicture}

\draw (0,-0.5) ellipse (0.5cm and 0.9cm);

\node [] at (0,-0.5) {\textbullet };

\draw (2.5,0) ellipse (0.5cm and 0.9cm);
\node [] at (2.5, 0.45) {\textbullet };
\draw[thick,->] ((0,-0.5) -- (2.36, 0.5) node[anchor=north west] {};
\node at (1.2,0.3) {$R$};

\draw (5, 0) ellipse (0.5cm and 0.9cm);
\node [] at (2.5, -0.5) {\textbullet };
\draw[thick,->] ((2.5,-0.5) -- (3.5, -0.5) node[anchor=north west] {};
\node at (3.3, -1.3) {$R$};
\draw[thick,->] ((4, 0.45) -- (4.9, 0.45) node[anchor=north west] {};
\node at (4.4, 0.7) {$R$};
\node [] at (5, 0.45) {\textbullet};
\node [] at (5, -0.5) {\textbullet};
\node [] at (3.8, -0.5) {.\,.\,.\,.\,.};

\draw (7.5,-0.5) ellipse (0.5cm and 0.9cm);
\draw[thick,->] ((5,-0.5) -- (7.4, -0.5) node[anchor=north west] {};
\node at (6.2, -0.3) {$R$};
\node [] at (7.5, -0.5) {\textbullet};

\draw (2.5,-2) ellipse (0.5cm and 0.9cm);
\node [] at (2.5, -2.5) {\textbullet };

\node at (1.2, -1.1) {$R$};

\draw (5, -2) ellipse (0.5cm and 0.9cm);
\node [] at (2.5, -2.5) {\textbullet };
\node [] at (2.5, -1.5) {\textbullet };
\draw[thick,->] ((3.5,-1.5) -- (2.6, -1.5) node[anchor=north west] {};
\node at (3.2, -0.3) {$R$};
\draw[thick,->] ((4.9, -2.5) -- (4, -2.5) node[anchor=north west] {};
\node at (4.25, -2.3) {$R$};
\node [] at (5, -1.45) {\textbullet};
\node [] at (5, -2.5) {\textbullet};
\node [] at (3.6, -2.5) {.\,.\,.\,.};
\draw[thick,->] ((7.41, -0.58) -- (5.1, -1.4) node[anchor=north west] {};
\node at (5.7, -0.94) {$R$};

\draw[thick,->] ((2.5, -2.5) -- (0.1, -0.51) node[anchor=north west] {};
\draw[very thick, ->] (0,0.4) to [out=90,in=90] (7.4,0.4);
\node at (4, 2.8) {$R''$};
\draw[very thick, ->] (7.4,-1.4) to [out=270,in=270] (0,-1.4);
\node at (4, -3.3) {$R''$};

\end{tikzpicture}

Fig.2 
\end{center}

Note that there is a finite number of equivalence classes in $M'$, hence there is a finite number of $R'$-paths (i.e., finite sequences $x_0\, x_1 \dots x_n$ such that $x_iR'x_{i+1}$ for any $i<n$) from one $R'_{D}$-irreflexive class to another, satisfying the following conditions

\begin{itemize}

\item no classes are repeated,

\item there are no $R'_{D}$-irreflexive classes except the beginning and the end of the path.

\end{itemize}

Let $L_1, L_2,\dots, L_m$ are all such paths. $A_{n_1}, A_{n_2}, \dots, A_{n_{n'}}$ are all the  $R'_D$-reflexive classes that appear in $L_1$. We consider all this classes ascending their numbering. Assume that $A_{n_i}$ is visible from class $A_{n_j}$ and sees class $A_{n_k}$ in $L_1$. We devide points of class $A_{n_i}$ into four parts.

1. Points of class $ A_{n_i} $ that are visible from the class $ A_{n_j} $ and see the class $ A_{n_k} $ at the same time will be denoted by $N$.

2. Points of class $ A_{n_i} $ that are visible from the class $ A_{n_j} $ and don't see the class $A_{n_k}$ will be denoted by $ N_1 $. 

3. Points of class $ A_{n_i} $ that are not visible from the class $ A_{n_j} $ and see the class $ A_{n_k} $ are denoted by $ N_2 $. 

4. The last class is $A_{n_i}\setminus(N_1\cup N_2\cup N)$.

\begin{center}

\begin{tikzpicture}

\draw (2.5,-2) ellipse (1cm and 1.8cm);
\draw[thick,->] ((2.5, -2.5) -- (7, 0.2) node[anchor=north west] {};
\node at (2.5, -4.1) {$A_{n_j}$};
\node [] at (2.5, -2.5) {\textbullet};
\node [] at (7.1, 0.21) {\textbullet};
\draw[thick,->] ((1.7, -1.5) -- (6.5, 1.5) node[anchor=north west] {};
\draw[thick,->] ((6.7, 1.45) -- (10.5, -0.5) node[anchor=north west] {};
\node [] at (10.6, -0.515) {\textbullet};
\node [] at (1.7, -1.5) {\textbullet};
\node [] at (6.6, 1.45) {\textbullet};
\draw[thick,->] ((2.6, -1.7) -- (5.8, 0.2) node[anchor=north west] {};
\node [] at (2.6, -1.7) {\textbullet};
\node [] at (5.85, 0.25) {\textbullet};
\draw[thick] ((5.5, 0.5) -- (7.45, 0.5) node[anchor=north west] {};
\node at (6.5, 1) {$N$};
\node at (6.1, 0) {$N_1$};
\draw[thick] ((5.5, -0.3) -- (7.45, -0.3) node[anchor=north west] {};
\draw[thick] ((5.65, -1) -- (7.38, -1) node[anchor=north west] {};
\node at (6.05, -0.7) {$N_2$};
\draw (6.5, 0) ellipse (1cm and 1.8cm);
\node at (6.5, -2.1) {$A_{n_i}$};

\draw[thick, ->] (6.35,-0.5)-- (10, -1) node[anchor=north west] {};
\node at (6.35,-0.5) {\textbullet};\node at (10.05,-1.05) {\textbullet};

\draw[thick, ->] (6.35,-0.9)-- (10, -1.4) node[anchor=north west] {};
\node at (6.35,-0.9) {\textbullet};\node at (10.1,-1.45) {\textbullet};

\draw (10.5,-2) ellipse (1cm and 1.8cm);
\node at (10.5, -4.1) {$A_{n_k}$};

\end{tikzpicture}

\end{center}

\begin{center}

Fig.3

\end{center}

We do this division for all the $n'$ classes. Then we do the same procedure for $L_2, L_3, \dots, L_m$. In result we get divisions for all $R'_D$-reflexive classes $A_1, A_2,\dots, A_{l}$, that appear in the above paths. Then we take the intersection of all divisions of $A_1$ and replace $A_1$ to its subclasses generated by intersection. Then we do the same replacement for all $A_2,\dots, A_l$ and get the frame $F'' = (W'',\, R'',\, R_{D}'' )$.

Then we take the transitive closure of $R''$ (denote $G = (W'',\, S,\, R_{D}'' )$, where $S$ is the transitive closure of $R''$).

We call $A$ $parent\,\, class$ of $a$ if $a\subseteq A$, where $A \in W'$ and $a \in W''$. 

\begin{claim}

$G = (W'',\, S,\, R_{D}'' )$ is an $S4DT_0$ frame. 

\end{claim}

\begin{proof}
We check only $AT_0$ axiom. Assume opposite, i.e. there are mutually reachable by the first relation and irreflexive by the second relation two classes $a$ and $b$ in $G$. Then, there must be a path $a = x_1\, x_2\dots x_n = b$ (or $b = x_1\, x_2\dots x_n = a$), where $x_iR'' x_{i+1}$, for $i = 1,\dots, n-1$, (by the lemma 3.4) satisfying the following conditions:

\begin{itemize}

\item $n > 2$

\item no classes are repeated,

\item there are no classes that have a point that sees a point from right neighbor class and is visible from a point of left neighbor class as in Fig.4.

\end{itemize}

\begin{center}
\begin{tikzpicture}

\draw (-6,-4) ellipse (0.5cm and 0.9cm);

\draw (-2,-3) ellipse (0.5cm and 0.9cm);

\draw (2,-4) ellipse (0.5cm and 0.9cm);

\draw[thick,->] (-6,-4) -- (-2,-3) node[anchor=north west] {a};

\draw[thick,->] (-1.9,-3) -- (2,-4) node[anchor=north west] {c};

\draw[thick,->] (-6,-4.1) -- (2,-4.1) node[anchor=north west] {};

\draw[thick,.] (-6.15 ,-4.1) -- (-6.15,-4.1) node[anchor=north west] {b};

\node [] at (-6.1,-4.1) {\textbullet};

\node [] at (-1.95,-2.9) {\textbullet};

\node [] at  (2.1,-4.1) {\textbullet};

\end{tikzpicture}

Fig. 4

\end{center}

Consider parent classes of $a = x_1\, x_2\dots x_n = b$. Since we didn't devide classes irreflexive by the second relation, the parent classes of $a = x_1, x_2\dots x_n = b$ can be represented as $a = x_1, X_2\dots X_{n-1}\, x_n = b$, where $X_{i}$ is the parent class of $x_i$. 

Now we can delete all circles from (there can be many ways to do this, we do it in some way) $a = x_1\, X_2\dots\,X_{n-1}\, x_n = b$ and get a simple (a path without repeated classes) path from $a$ to $b$. As we don't have any irreflexive by the second relation class between $a$ and $b$, the simple path will have the form $a = x_1, X_{n_1} = X_{2}\dots, X_{n_{n'}} = X_{n-1},\, x_n = b$ (denote this path as $L$), where $1, n_1, n_2, n_{n'}, n$ is subsequence of $1, 2,\dots, n$. 

We have already considered $L$ and devided all classes of this path into 4 parts.

\begin{center}

\begin{tikzpicture}

\draw (2.5,-2) ellipse (1cm and 1.8cm);
\draw[thick,->] ((2.5, -2.5) -- (7, 0.2) node[anchor=north west] {};
\node at (2.5, -4.1) {$a$};
\node [] at (2.5, -2.5) {\textbullet};
\node [] at (7.1, 0.21) {\textbullet};
\draw[thick,->] ((2.5, -2.5) -- (6.5, 1.5) node[anchor=north west] {};
\node [] at (6.6, 1.45) {\textbullet};
\draw[thick,->] ((6.7, 1.5) -- (9.5, 1.5) node[anchor=north west] {};
\draw[thick] ((5.5, 0.5) -- (7.45, 0.5) node[anchor=north west] {};
\node at (6.5, 1) {$N$};
\node at (6.1, 0) {$N_1$};
\draw[thick] ((5.5, -0.3) -- (7.45, -0.3) node[anchor=north west] {};
\draw[thick] ((5.65, -1) -- (7.38, -1) node[anchor=north west] {};
\node at (6.05, -0.7) {$N_2$};
\draw (6.5, 0) ellipse (1cm and 1.8cm);
\node at (6.5, -2.1) {$X_2$};

\end{tikzpicture}

\end{center}

It's obvious that $x_2\subseteq N $ or $x_2\subseteq N_1$ and all the points of $N$ and $N_1$ are visible from $a$. So, $x_2$ have a point that sees a point from right neighbor class and is visible from $a$. This contradiction shows that, $aSb \Rightarrow aRb$. Since $F$ is an $S4DT_0$-cone we finish the proof.

\end{proof}

Then using the Lemma 1.1 we get the end of the proof.

\end{proof}

\section{Completeness of $S4DT_{0}$ with respect to $T_0$-spaces}

\begin{definition}Let $F = (W, R)$ be an $\mathbf{S4}$-frame, then the set $C(x) = R(x) \cap R^{-1}(x)$ for some $x \in W $ is called a $cluster$.
\end{definition}

\begin{definition}

Let $A_1, A_2, \dots, A_n$ be sets of sets. We define $\biguplus_{i} A_{i} = \{a_1 \cup \dots \cup  a_n |\, a_i\in A_i  \}$.

\end{definition}

\begin{theorem}The logic $\mathbf{S4DT_0}$ is complete with respect to topological $T_0$-spaces.
\end{theorem}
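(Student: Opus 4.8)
The plan is to derive topological completeness from the already-established Kripke completeness of $\mathbf{S4DT_0}$ (the previous lemma), by showing that every countermodel on an $\mathbf{S4DT_0}$-cone can be transformed into a countermodel on a $T_0$-space. Suppose $\phi \notin \mathbf{S4DT_0}$. By Kripke completeness and Lemma on cones ($L(F) = \bigcap_{x} L(F^x)$), there is an $\mathbf{S4D}$-cone $F = (W, R, R_D)$ validating $\mathbf{S4DT_0}$ with a point refuting $\phi$. By Lemma~6.6 (the frame condition for $AT_0$), $F$ has the property that whenever $xRy$ and $yRx$ with $x \neq y$, at most one of $x,y$ is $R_D$-reflexive; equivalently, inside each cluster of $(W,R)$ there is at most one $R_D$-reflexive point, so every non-selected point of $Top_D(F)$ sits in a cluster all of whose other points are selected. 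By Lemma~4.2 (and its corollary), $L(F) = L(Top_D(F))$, so $Top_D(F)$ is a space with selected points refuting $\phi$; the task reduces to replacing it by a genuine $T_0$-space refuting $\phi$.

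The key construction is a p-morphism onto a $T_0$-space. The obstruction to $T_0$-ness in $Top_D(F) = (Top(F), A)$ is exactly a topologically indistinguishable pair of points, i.e.\ a cluster of size $\geq 2$ in $(W,R)$; and, crucially, the $AT_0$ frame condition tells us such a cluster contains at most one non-selected point. The idea is: for each cluster $C$ with $|C| \geq 2$, all but possibly one of its points are selected (irreflexive for $R_D$). I would build a new space by ``splitting'' or rather by assigning to such a cluster a small $T_0$ space (for instance a suitable Alexandrov $T_0$ space on the same number of points, such as a chain or a space with a generic point) together with selected-point data matching $A$, and glue these local pieces compatibly with the global specialization order coming from $R$ on the quotient of $W$ by clusters. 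Concretely, one takes the poset $\widehat{W}$ of clusters of $(W,R)$, replaces each cluster $C$ by a $T_0$ Alexandrov space $T_C$ of cardinality $|C|$ whose points are all topologically distinct, stacks these over $\widehat{W}$ to get an Alexandrov $T_0$ space $\mathbb{X}'$, and defines the selected set $A'$ so that the natural projection $g : \mathbb{X}' \to Top(F)$ collapsing each $T_C$ back to $C$ becomes a p-morphism of spaces with selected points. The selected-point bookkeeping is where $|C \setminus A| \leq 1$ is essential: the definition of p-morphism with selected points requires $A = \{c \mid \exists a \in A'\,(g^{-1}(c) = \{a\})\}$, and since a reflexive $c$ in a nontrivial cluster has $|g^{-1}(c)| \geq 2$ while an irreflexive $c$ must be the unique preimage of some selected point — this forces each nontrivial cluster to have a unique non-selected point, precisely the $AT_0$ condition, so the construction is consistent exactly when $F \vDash AT_0$.

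Once $g : \mathbb{X}' = (\mathbb{X}', A') \twoheadrightarrow Top_D(F)$ is verified to be a p-morphism of spaces with selected points (openness and continuity are immediate for these Alexandrov stackings; surjectivity is clear; the selected-point equation is the content of the previous paragraph), Lemma~5.5 gives, for the pulled-back valuation, that $\mathbb{X}'$ refutes $\phi$ at any preimage of the refuting point. It remains to check $\mathbb{X}'$ is $T_0$: points in different clusters are separated because the specialization preorder on $\widehat{W}$ is a genuine partial order, and points within the same $T_C$ are separated by construction; one should also confirm $\mathbb{X}'$ validates $AT_0$, which follows from Lemma~3.1 since $\mathbb{X}'$ is $T_0$. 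Therefore $\phi$ is refuted on a $T_0$-space, so $L(\{T_0\text{-spaces}\}) \subseteq \mathbf{S4DT_0}$; the reverse inclusion is Lemma~3.1 together with soundness of $\mathbf{S4D}$ over all topological spaces (the $\mathbf{S4D}$ axioms are valid with the interior/inequality interpretation). Hence $L(\{T_0\text{-spaces}\}) = \mathbf{S4DT_0}$.

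I expect the main obstacle to be the precise definition of the $T_C$'s and the selected set $A'$ so that the selected-point p-morphism equation holds globally and simultaneously across all clusters, especially handling the interaction between a nontrivial cluster's unique irreflexive point and the possibility that \emph{adjacent} clusters also contribute to which points of $\mathbb{X}'$ are selected; getting this bookkeeping right — rather than the topological verifications, which are routine for Alexandrov spaces — is the delicate part, and it is exactly here that the $AT_0$ hypothesis is used in an essential way.
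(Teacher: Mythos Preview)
Your overall strategy---pull back along a p-morphism from a $T_0$-space onto $Top_D(F)$---is the right one and matches the paper. But two concrete errors make the construction as written unworkable.

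First, you have the $AT_0$ frame condition backwards. The condition $xRy\wedge yRx\Rightarrow xR_Dx\vee yR_Dy$ says that in any nondegenerate cluster at most one point is $R_D$-\emph{irreflexive}, i.e.\ at most one \emph{selected} point; there can be many $R_D$-reflexive (non-selected) points. Since in a genuine topological space (with $[\ne]$ read as literal inequality) \emph{every} point is selected, the p-morphism clause $A_{\mathbb Y}=\{y:\exists x\in A_{\mathbb X}\,(g^{-1}(y)=\{x\})\}$ forces $|g^{-1}(c)|=1$ exactly for the (at most one) selected $c$ in each cluster and $|g^{-1}(c)|\ge 2$ for every non-selected $c$. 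Your own paragraph about fibres already hints at this, but it is incompatible with taking $|T_C|=|C|$ and $g$ a bijection on each cluster.

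Second, and independently, a bijective $g$ from any $T_0$ space $T_C$ with $|T_C|=|C|\ge 2$ onto a cluster $C$ can never be open: $T_C$ has a proper nonempty open subset, while the relative topology on $C\subseteq Top(F)$ is indiscrete, so the image of that subset is not open. Hence your $g$ is not a p-morphism. The paper fixes both problems at once by replacing each (finite) cluster by an \emph{infinite} $T_0$ chain and letting the map cycle through the cluster, so that every nonempty open tail surjects onto $C$; the unique selected point of $C$ (if present) is given the single extra point $+\infty$ as its sole preimage. For this the paper invokes the finite model property, which you do not use. If you want to repair your argument, the essential missing idea is precisely this: each non-selected point of a nontrivial cluster must acquire infinitely many preimages, and the fibre topology must be arranged so that every nonempty open set hits all of them.
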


\begin{proof}By the lemma 3.4$\;$ each $R$-cluster in an $\mathbf{S4DT_0}$-frame contains no more than one selected point. We know that the logic $\mathbf{S4DT_0}$ has  finite model property, in other words, there is a class $Q$ of finite $\mathbf{S4DT_0}$ cones whose logic is $\mathbf{S4DT_0}$. For each $\mathbf{S4DT_0}$ cone $F \in Q $, we construct a $T_0$-space and a p-morphism from the space to $ Top_{D}(F)$. Consider the following 3 cases:

I. Assume that the cone is a cluster without $R_D$-irreflexive points. As a domain of the space $\mathbb{X}$, we take a countable set of points $X = \{x_1, ..., x_n, ... \}$. We define a topology on $X$ as $T = \{U_n \; | \; n \in \mathbb{N} \} \cup \{\emptyset\}$, where $U_n = \{x_m \; | \; m \geq n  \}$. Let us verify that $\mathbb{X}$ is indeed a topological space:

1. $X, \emptyset \in T$, because $U_1 = X$;

2. $ \bigcup_{k \in I} U_k = U_l $, where $l = min \; I$, $ \forall I \subseteq \mathbb {N} $;

3. $\bigcap_{k \in I} U_k = U_l $, where $l = max \, I$, $I$ is a finite subset of $\mathbb{N} $.

Let $ W = \{w_1, w_2, ..., w_m \} $. We define the map as

\begin{center}

$f(x_{mk + i}) = w_i $,

\end {center}
where $ m $ is the cardinality of $ W $, $i\in \{1, \dots, m \}$ and $ k $ ranges over all natural numbers.

 Function $f$ defined above is a p-morphism, since $f$ is  surjective by the construction, the image of any open set in $ T $ is either the empty set or $ W $, the preimages of the only open sets $ \emptyset $ and $ W $ are either the empty set or $ X $. The set of selected points of the cone is  empty.

II. Let the cone be a cluster with one $R_D$-irreflexive point. As a domain of the space $\mathbb{X}$, we take $X = \{x_1, x_2, ..., x_n, ...\}\bigcup \{$ $\infty\}$. We define a topology on $X$ as $T = \{U'_n \; | \; n \in \mathbb{N} \} \bigcup \{\emptyset$\}, where $U'_n = \{x_m\;|\;m\geq n\}\bigcup\{\infty\}$. Let us verify that $\mathbb{X}$ is indeed a topological space:

1. $X, \emptyset \in T$, because $U'_1 = X$;

2. $ \bigcup_{k \in I} U'_k = U'_l $, where $l = min \; I$, $ \forall I \subseteq \mathbb {N} $;

3. $\bigcap_{k \in I} U'_k = U'_l $, where $l = max \, I$, $I$ is a finite subset of $\mathbb{N} $.

Let $ W = \{w_0, w_1, ..., w_m \}$, where $ w_0 $ is $R_D$-irreflexive point. We define the map as

\begin {center}

$f (x_{mk + i}) = w_i,\, i = 1, 2,\dots, m $,

\end {center}

\begin {center}

$f (\infty) = w_0$.

\end {center}
where $m$ is the cardinality of $W\backslash\{m_{0}\}$ and $k$ ranges over all natural numbers.

Function $f$ defined above is a p-morphism, since $f$ is  surjective by the construction, the image of any open set in $ T $ is either an empty set or $ W $, the preimages of the only open sets $ \emptyset $ and $ W $ are either an empty set or $ X $ and  $f^{-1}(w_0) = \{ \infty\}$ is irreflexive singleton.

III. Let us consider a general case. Let frame $ F = (W, R, R_ {D}) $ be a cone. The preorder $R$ induces an equivalence relation on $F:$

\begin{center}

$x\sim y \Leftrightarrow xRy \wedge yRx$.

\end{center}

It identifies points belonging to the same cluster. Let $F' = (W', R') = (W, R)/\sim$ is called the $skeleton$ of $F$. Let $Top(F') = (Y, T_Y)$.

Now we can construct the required topological space and define required p-morphism. Let us construct disjoint topological spaces for each cluster according to cases I and II i.e., $ \mathbb{X}_i = (X_i, T_i)$ and the p-morphisms $f_i$ in the case of clusters. The domain of the space is $X =\bigcup_{i\in I} X_i$, where $I$ is the set of all clusters, $X_i$ is the domain of the corresponding space. We define the topology as $T_X =\{\emptyset\}\cup \bigcup_{U\in T_{Y}} O_U $, where $O_U = \biguplus_{a\in U} (T_{a}\backslash\{\emptyset \})$. 
 
Each element $V$ of $T_X$ can be represented as $V = \bigcup_{a\in U} U_{a}$,  where $U$ is an open set in $Top(F')$ that $corresponds$ to $V$ and $U_{a}$ is a nonempty open set from the topological space $X_a$, corresponding to the cluster $a$.

\begin{claim}

$T_X $ is a topology.

\end{claim}

\begin{proof}

That $\emptyset, \, X \in  T_X$ is obvious.

Suppose $I$ is the set of all clusters and $\{U_j : j\in J\}\subset T_X$. Each open set $U_j$, where $j\in J$ corresponds to $V_j$, which is an open set in $T_Y$. Then, $\bigcup_{j\in J} V_j = V$ is open. Then $\bigcup_{j\in J} U_j = \bigcup_{a_k \in V}\bigcup_{U_j\in J} (X_{a_k}\cap U_j)$, where $X_{a_k}$ is the domain of the space that corresponds to $a_k\in V$. For each $a_k \in V$ the set $\bigcup_{U_j\in J} (X_{a_k}\cap U_j)$ is a nonempty open set in $T_{a_{k}}$. In the end, from the definition of open sets in $T_X$, we conclude that $\bigcup_{j\in J} U_j$ is open in $T_X$ .

Then, assume $U'$ and $U''$ are open in $T_X$ and each $U'$ and $U''$ corresponds to $V'$ and $V''$, which are open in $T_Y$. Then, $U'\cap U'' = \bigcup_{a_j\in V }((U'\cap X_{a_j}) \cap (U''\cap X_{a_j}))$ is open, where $V = V'\cap V''$ and $X_{a_j}$ is the domain of the space that corresponds to $a_j\in V$.

\end{proof}

\begin{claim}

$\mathbb{X}$ is a $T_0$ space.

\end{claim}

\begin{proof}

Consider two points $x \ne y$. If their image under the map $f$ falls into the same cluster $a$, then there is a space $(X_a, T_a)$ corresponding to this cluster which contains these points.  Since this space is a $T_0$ space, then there exists a $U_a\in T_a$ that contains only one of these points. Then we take $\bigcup_{y\neq a\, \&\, y \in Y} X_y \,\,\, \cup U_{a}$.

In the case when the points $x$ and $y$ lie on different spaces $\mathbb{X}_a = (X_a, T_a)$ and $ \mathbb{X}_b = (X_b, T_b)$ correspondingly, we consider points $c_a, c_b\in F'$ which correspond to $\mathbb{X}_a$ and $\mathbb{X}_b$. Since $R'$ is partial order, then $\neg (c_aR'c_b)$ or $\neg (c_bR'c_a)$. In the first case $\cup_{c\in R'(c_a)} X_c$ is an open set, that contains $x$ but doesn't contain $y$. The second case is treated similarly.

\end{proof}

We define $f$ as the union of the maps $ f_i $.

\begin{claim}

$f : \mathbb{X}\twoheadrightarrow Top_D(F)$.

\end{claim}

\begin{proof}

Surjectivity of $f$ follows from surjectivity of each $f_i$. 

Suppose $V$ is open in $Top_D(F)$. $V$ corresponds to an open set $U$ in $Top(F')$ then $f^{-1}(V) = \cup_{a\in U} X_a$ that is open in $\mathbb{X}$.

Assume $U$ is open in $\mathbb{X}$ and corresponds to an open set $V$ in $Top(F')$ (i.e. $U = \cup_{a\in V}\, U'_a$, where $U'_{a}$ is open in $\mathbb{X}$). Then, $f(U) = \bigcup_{a\in V} f(U'_{a}) = \bigcup_{a\in V} f_{a}(U'_{a})$, where $f_{a}(U'_{a})$ is cluster. Then $R'(V)\subseteq V \Rightarrow R(U)\subseteq U$, i.e., $U$ is open in $Top_D(F)$.

\end{proof}

So, we have constructed a topological space for each cone in $Q$, and then a corresponding p-morphism. Further, by the lemma 4.1 and by the theorems 2.1, 2.2 and 5.1 we obtain the assertion of the theorem.

\end{proof}

\section{References}
\label{S.7}
%%
%% Following citation commands can be used in the body text:
%% Usage of \cite is as follows:
%%   \cite{key}          ==>>  [#]
%%   \cite[chap. 2]{key} ==>>  [#, chap. 2]
%%   \citet{key}         ==>>  Author [#]

%% References with bibTeX database:


\begin{thebibliography}{}

\bibitem{2}  %%  a book
{\sc Blackburn},~P., M.~{\sc de~Rijke}, and Y.~{\sc Venema},
{\em Modal Logic}, Cambridge University Press, 2001.

\bibitem{1} 
{\sc Bezhanishvili},~G., L.~{\sc Esakia}, and D.~{\sc Gabelaia}, `Some results on modal axiomatization and definability for topological spaces', \emph {Studia Logica} 41:325--355, 2005.

\bibitem{17}  
{\sc Esakia},~L., `Weak transitivity   a restitution', \emph {Logical investigations} 8:244--245, 2001.

\bibitem{3} 
{\sc Chagrov},~A., and  M.~{\sc Zakharyaschev},
{\em Modal Logic}, Oxford University Press, 1997.


\bibitem{6} 
{\sc Kudinov},~A., `Topological Modal Logics with Difference Modality', \emph {Advances in Modal Logic, College Publications} 319--332, 2006.

\bibitem{7} McKinsey J. C. C., Tarski A. The algebra of topology // Annals of Mathematics.-- 1944.— Vol. 45, no. 1.-- pp. 141–191.

\bibitem{10}
{\sc Shehtman} ~V., 'Filtration via Bisimulation', \emph {Advances in Modal Logics, vol.5, King's Coll. Publ.}: 289--308, 2005.


\bibitem{16} 
{\sc Kuratowski},~K., and  J.~{\sc Jaworowski},
{\em Topology (vol 1)}, Academic P., U.S., 1966.



\bibitem{13}
{\sc Kudinov} ~A., and I.~{\sc Shapirovski}, 'On partitioning Kripke frames of finite height' \emph {Izvestiya Rossiiskoi Akademii Nauk. Seriya Matematicheskay} 81:134--159, 2017.


\bibitem{15}
{\sc Gabelaia},~D., {\em Modal definability in topology}, Master’s thesis, University of Amsterdam, ILLC, 2001.


\bibitem{14}
{\sc Sahlqvist} ~H. 'Completeness and correspondence in the first and second order semantics for modal logic', \emph {Studies in Logic and the Foundations of Mathematics} 82:110--143, 1975.


\bibitem{12}
{\sc Rasiowa},~H., R.~{Sikorski}, 'The  mathematics  of metamathematics', Panstwowe Wydawnictwo Naukowe, 1963.



\bibitem{19} {\sc Goldblatt},~R., 'Logics of time and computation', Centre for the Study of Language and Information, 1992.

\bibitem{20}{\sc Kudinov}, ~A., 'Topological modal logics with difference modality', PhD Thesis, 2008.

\bibitem{21}{\sc Shehtman}, ~V., 'Derived sets in Euclidean spaces and modal logic', \emph {ITLI Prepublication Series}, X-90-05, University of Amsterdam, 1990.

\bibitem{22} {\sc Gargov}, ~G. and V. ~{\sc Goranko}, 'Modal logic with names', \emph {Colloquium on Modal Logic}: 81 -- 103, Dutch Network for Language, Logic and Information, Amsterdam, 1991.

\bibitem{23} {\sc Goranko},~V. and S.~{\sc Passy}, 'Using the universal modality: gains and questions', \emph {Journal of Logic and Computation} 2: 5--30, 1992.

\bibitem{24} {\sc Shehtman}, ~V., 'Everywhere' and 'here', \emph{Journal of Applied Non-Classical Logics} 9: 369--380, 1999.
 
\bibitem{25}{\sc Kudinov}, ~A., 'Difference modality in topological spaces', \emph{Algebraic and Topological Methods in Non-classical Logics II}: 50--51, Barcelona, abstracts, 2005.

\end{thebibliography}
\end{document}